\documentclass[12pt]{amsart}
\usepackage{amssymb}
\usepackage{mathtools}
\usepackage[english]{babel}
\usepackage[utf8]{inputenc}
\usepackage{epsfig}
\setlength{\textheight}{20cm} \textwidth16cm \hoffset=-2truecm
\linespread{1.2}
\usepackage{tikz}
\usepackage{mathptmx}
\usepackage{amsmath,amsfonts,amssymb,amsthm}
\usepackage{mathrsfs}
\usepackage{enumitem}
\usepackage[all]{xy}
\usepackage{graphicx}
\usepackage{latexsym}
\usepackage{verbatim}
\usepackage{hyperref}
\numberwithin{equation}{section}

\def\Re{{\sf Re}\,}
\def\Im{{\sf Im}\,}

\newcommand{\D}{\mathbb D}

\newcommand{\de}{\partial}
\newcommand{\R}{\mathbb R}
\newcommand{\Ha}{\mathbb H}

\newcommand{\C}{\mathbb C}

\newcommand{\Hol}{{\sf Hol}}
\newcommand{\Aut}{{\sf Aut}(\mathbb D)}

\newcommand{\strip}{\mathbb{S}}






\def\Aut{{\sf Aut}}

\def\Re{{\sf Re}\,}
\def\Im{{\sf Im}\,}

\def\Re{{\sf Re}\,}
\def\Im{{\sf Im}\,}

\def\1#1{\overline{#1}}
\def\2#1{\widetilde{#1}}
\def\3#1{\widehat{#1}}
\def\4#1{\mathbb{#1}}
\def\5#1{\frak{#1}}
\def\6#1{{\mathcal{#1}}}

\def\Re{{\sf Re}\,}
\def\Im{{\sf Im}\,}

\newcommand{\mcite}[1]{\csname b@#1\endcsname}

\theoremstyle{plain}

\setcounter {result} {64}






\def\Aut{{\sf Aut}}

\def\Re{{\sf Re}\,}
\def\Im{{\sf Im}\,}

\emergencystretch15pt \frenchspacing

\newtheorem{theorem}{Theorem}[section]

\newtheorem{proposition}[theorem]{Proposition}
\newtheorem{corollary}[theorem]{Corollary}

\theoremstyle{definition}
\newtheorem{definition}[theorem]{Definition}

\theoremstyle{remark}
\newtheorem{remark}[theorem]{Remark}

\numberwithin{equation}{section}

\title[Asymptotic upper bound for tangential speed of parabolic semigroups]{Asymptotic upper bound for tangential speed of parabolic semigroups of holomorphic self-maps in the unit disc}


\author[D. Cordella]{Davide Cordella$^\dag$
}	
\address{D. Cordella: Dipartimento di Matematica, Universit\`a di Roma ``Tor Vergata", Via della Ricerca
Scientifica 1, 00133, Roma, Italia.} 
\email{cordella@mat.uniroma2.it}



\thanks{$^\dag$Partially supported by PRIN {\sl Real and Complex
Manifolds: Topology, Geometry and holomorphic dynamics} n.2017JZ2SW5 and  by the MIUR Excellence Department Project awarded to the
Department of Mathematics, University of Rome Tor Vergata, CUP E83C18000100006}

\long\def\ReM#1{\relax}

\begin{document}
	
\selectlanguage{english}

\begin{abstract}
We show that the tangential speed $v^T_\phi(t)$ of a parabolic semigroup $(\phi_t)$ of holomorphic self-maps in the unit disc is asymptotically bounded from above by $(1/2)\log t$, proving a conjecture by Bracci. In order to show the proof we need a result of ``asymptotical monotonicity'' of the tangential speed for proper pairs of parabolic semigroups with positive hyperbolic step.
\end{abstract}

\maketitle

\tableofcontents

\section[Introduction]{Introduction}
Semigroups of holomorphic self-maps in the unit disc $\D:=\{\zeta\in \C: |\zeta|<1\}$ have been, and still are, object of several studies since more than one century, both for their intrinsic interest and for applications. See for instance \cite[Section 1.4]{Ababook89}, and the monographs \cite{BCDbook,EliShobook10,Shobook01}.

In this paper, we focus   on \emph{non-elliptic} semigroups $(\phi_t)_{t\ge 0}$, namely, those semigroups  for which all the orbits converge (uniformly on compact subsets) to a single point $\tau$ of the unit circle, the \emph{Denjoy-Wolff point} of the semigroup, and we study the rate of  convergence to such a point. 

In order to describe the rate of  convergence of a non-elliptic continuous semigroup $(\phi_t)$ of holomorphic self-maps of the unit disc---a semigroup in $\D$ for short---with respect to the parameter of the semigroup, in the paper \cite{Br} (see also \cite[Chapter 16]{BCDbook}) Bracci introduced three quantities, called ``speeds''. The first one, the \emph{total speed} $v_\phi(t)$ is simply the hyperbolic distance from the origin $0$ to $\phi_t(0)$. If $\tau\in\partial \D$ is the Denjoy-Wolff point of $(\phi_t)$, and $\gamma_\tau$ is the diameter from $-\tau$ to $\tau$, there exists a unique point $\xi_t\in \gamma_\tau$ so that the hyperbolic distance between $\phi_t(0)$ and $\xi_t$ equals the hyperbolic distance between $\phi_t(0)$ and $\gamma_\tau$. The \emph{orthogonal speed} $v^o_\phi(t)$ of $(\phi_t)$ is the hyperbolic distance between $0$ and $\xi_t$, while, the \emph{tangential speed} $v^T_\phi(t)$ is the hyperbolic distance between $\xi_t$ and $\phi_t(0)$. 

By the properties of Poincar\'e metric, the total speed is the sum of the two components, up to  universal additive constants.  Moreover, $v^T_\phi(t)\leq v^o_\phi(t)+4\log 2$ for all $t\ge 0$. The orthogonal component is related to the \emph{rate of convergence} of the semigroup as $v^T_\phi(t)$ goes like $-\frac{1}{2}\log|\tau-\phi_t(0)|$ for $t\to+\infty$, whereas the tangential component gives  information about the \emph{slope} of convergence: the tangential speed is bounded from above if and only if the orbit of the semigroup converges non-tangentially to $\tau$.

It is known that $\lim_{t\to+\infty}v_\phi(t)/t=\lambda/2$ where $\lambda\ge 0$ is the so-called \emph{spectral value} of the (non-elliptic) semigroup (see \cite[Proposition 16.2.1]{BCDbook}). In the case of hyperbolic semigroups $(\lambda>0)$ we have always non-tangential convergence to the Denjoy-Wolff point, so $v^T_\phi(t)$ is bounded and the orthogonal speed $v^o_\phi(t)$ goes like $v_\phi(t)$. Moreover, as shown in \cite[Proposition 16.2.2]{BCDbook},
\[
\liminf_{t\to+\infty}\,\left[v_\phi(t)-\frac{\lambda}{2}t\right]>-\infty
\]
and this asymptotic lower bound is sharp by taking a group of hyperbolic automorphisms. The same holds for the orthogonal speed, and then it follows that $|\tau-\phi_t(0)|\le Ke^{-\lambda t}$ for a certain positive constant $K$.

In the parabolic case $\lambda=0$ and so $\lim_{t\to+\infty}v_\phi(t)/t= 0$. This gives an asymptotic upper bound that cannot be improved, in the sense that for any function $f(t)\ge 0$ such that $\lim_{t\to+\infty}f(t)/t= 0$ there exists a parabolic semigroup $(\phi^f_t)$ for which $v_{\phi^f}(t)/f(t)$ is not bounded from above (see \cite[Proposition 16.2.8]{BCDbook}). As for a lower bound, in the worst case total speed grows as a logarithm. More precisely
\[
\liminf_{t\to+\infty}\,\left[v_\phi(t)-\frac{1}{4}\log t\right]>-\infty
\]
where $1/4$ is the best possible constant: we still refer to \cite[Proposition 16.2.2]{BCDbook}. The same bound was obtained by Betsakos in \cite{Bet} replacing the total speed with the orthogonal one. This result has been improved by Betsakos, Contreras, and D\'iaz-Madrigal in \cite{BetCD}, who gave other estimates according to the image of the Koenigs function of the semigroup. In \cite{BCK}, the author together with Bracci and Kourou partially proved a conjecture of Bracci \cite{Br} showing that in most cases the orthogonal speed is ``asymptotically monotone'' with respect to the image of the Koenigs function, which explain Betsakos' result from a geometrical point of view.

For a parabolic semigroup, either all orbits  converge non-tangentially to the Denjoy-Wolff point, or no one does; in the latter case $v^T_\phi(t)$ is unbounded. But, how much can it grow? For a group of parabolic automorphisms of the unit disc, the tangential speed behaves asymptotically like $(1/2)\log t$, up to an additive constant. The same result holds more generally for all semigroups of the form $\phi_t(z)=F^{-1}(F(z)+it)$ where $F$ is a conformal mapping (in fact, the Koenigs function of the semigroup) sending the unit disc $\D$ into a sector $W_\theta=i\{\zeta\in\C:-\theta<\arg\zeta<0\}$ for some $\theta\in(0,\pi]$ (see \cite[Corollary 16.2.6]{BCDbook}).

From these computations, Bracci conjectured in \cite{Br} that $(1/2)\log t$ gives an upper bound for all parabolic semigroups. The main result of this paper is to show that the conjecture is indeed true:
\begin{theorem}\label{thm:main}
	Let $(\phi_t)$ be a parabolic semigroup in $\D$. Then
	\begin{equation}\label{eq:conjecture}
		\limsup_{t\to+\infty}\left[v^T_\phi(t)-\frac{1}{2}\log t\right]<+\infty.
	\end{equation}
\end{theorem}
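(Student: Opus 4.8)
The plan is to pass to the Koenigs model and reduce everything to the case of positive hyperbolic step, where the semigroup can be compared with a parabolic automorphism. Write $h$ for the Koenigs function of $(\phi_t)$, normalised so that $h\circ\phi_t=h+it$, and set $\Omega:=h(\D)$; then $\Omega$ is simply connected with $\Omega+it\subseteq\Omega$ for all $t\ge0$, and, $h$ being conformal, $v^T_\phi(t)$ is the hyperbolic distance $k_\Omega$ from the orbit point $h(0)+it$ to the geodesic $h(\gamma_\tau)$, one of whose endpoints is the prime end of $\Omega$ toward which the translation pushes (the image of the Denjoy--Wolff point $\tau$). I would first dispose of the zero hyperbolic step case: then $\Omega$ lies in no half-plane $\{\Re w>c\}$ or $\{\Re w<c\}$, and I expect this to force the orbit $h(0)+it$ to converge to the distinguished prime end non-tangentially; since non-tangential convergence is equivalent to $v^T_\phi$ being bounded, \eqref{eq:conjecture} then holds trivially.

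So assume $(\phi_t)$ has positive hyperbolic step. Then $\Omega$ is contained in a half-plane of the above form, which after a (possibly anti-holomorphic) change of variable preserving the translation we may take to be $H=\{\Re w>0\}$, with $h(0)\in H$. Endow $H$ with the \emph{same} translation $w\mapsto w+it$; the resulting semigroup $(\psi_t)$ is a group of parabolic automorphisms of $\D$, for which $v^T_\psi(t)=\tfrac12\log t+O(1)$ by \cite[Corollary~16.2.6]{BCDbook}. The inclusion $\Omega\subseteq H$ intertwines the two translations and is compatible with the distinguished prime ends, so $\bigl((\phi_t),(\psi_t)\bigr)$ is a \emph{proper pair} of parabolic semigroups of positive hyperbolic step. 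Applying the asymptotic monotonicity of the tangential speed along such pairs — the auxiliary result announced in the introduction — one obtains
\[
\limsup_{t\to+\infty}\bigl[v^T_\phi(t)-v^T_\psi(t)\bigr]<+\infty,
\]
and combining this with $v^T_\psi(t)=\tfrac12\log t+O(1)$ gives \eqref{eq:conjecture}.

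The main obstacle is this monotonicity statement. Crude domain monotonicity is of no use, since shrinking $\Omega$ inside $H$ \emph{increases} the hyperbolic metric, so that $k_\Omega\ge k_H$ and both the total and the orthogonal speeds grow; the point is that their difference, the tangential component, does not. To prove it I would fix a conformal map $F_H\colon H\to\mathbb H$ sending the distinguished prime end to $\infty$, and write $F_\Omega=\Xi\circ\bigl(F_H|_\Omega\bigr)$, where $\Xi$ is the Riemann map of the subdomain $F_H(\Omega)\subseteq\mathbb H$ onto $\mathbb H$ that fixes the prime end at $\infty$. Up to a bounded error one then has $v^T_\phi(t)=\tfrac12\log\bigl(|q_\Omega(t)|/\Im q_\Omega(t)\bigr)$ and $v^T_\psi(t)=\tfrac12\log\bigl(|q_H(t)|/\Im q_H(t)\bigr)$, where $q_H(t)=F_H(h(0)+it)$ and $q_\Omega(t)=F_\Omega(h(0)+it)=\Xi\bigl(q_H(t)\bigr)$ are the orbit points escaping to $\infty$ in $\mathbb H$. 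What has to be shown is therefore a bound, uniform in $t$, to the effect that the ``expanding'' conformal map $\Xi$ of a subdomain of $\mathbb H$ onto $\mathbb H$, fixing the prime end at $\infty$, decreases the argument of the escaping orbit point by at most a bounded factor; equivalently, that $\Im\Xi$ grows at least as fast as the modulus distortion $|\Xi|/|\,\cdot\,|$ near $\infty$. I expect this to be the delicate step, requiring harmonic-measure or Beurling-type estimates, and to genuinely use the positive hyperbolic step hypothesis — it is precisely what rules out pathological containments such as a half-plane sitting inside a plane slit along a ray, for which the analogous conclusion is false. This estimate is the ``asymptotical monotonicity'' result carried out separately in the paper.
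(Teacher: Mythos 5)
Your reduction of the zero hyperbolic step case is where the argument breaks down. It is true that zero hyperbolic step is equivalent to the Koenigs domain $\Omega$ not being contained in any vertical half-plane, but this does \emph{not} force non-tangential convergence of the orbit. For a concrete counterexample, take $\Omega=\C\setminus\{x+iy:\ x\ge 0,\ y\le x^2\}$: it is starlike at infinity, it contains the whole left half-plane and points of arbitrarily large positive real part, so $\Omega^*=\C$ and the associated semigroup is parabolic of zero hyperbolic step; yet for $p=-1$ one has $\delta^-_p(t)=t$ while $\delta^+_p(t)\asymp\sqrt{t}$, so by the criterion of Theorem 2.1(b) the orbits converge \emph{tangentially} and $v^T_\phi$ is unbounded. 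Thus the zero-step case cannot be dismissed as ``trivially bounded''; it is in fact the heart of the matter. The paper handles it by a genuinely different device (Proposition 4.1): after the horocycle reduction it cuts $\Omega$ along vertical lines through a neighbourhood of $h(0)$, obtaining two positive-step semigroups $(\phi^\pm_t)$, and proves via the quasi-geodesic of \eqref{eq:sigmadefin}, the Shadowing Lemma and a Jordan-curve crossing argument that $v^T_\phi(t)\lesssim\max\{v^T_{\phi^+}(t),v^T_{\phi^-}(t)\}$. Nothing in your proposal replaces this step.

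There is also a gap in the positive-step half of your argument. The ``asymptotic monotonicity'' result that the paper actually proves (Proposition 4.2, with the variant of Remark 4.3) is not about an arbitrary inclusion $\Omega\subset\Ha$ compatible with the distinguished prime ends: it requires, in addition, that $\partial_\infty\Omega$ and $\partial_\infty\tilde\Omega$ be Jordan curves sharing the vertical half-line $i\R^+$ (or at least a common boundary piece near the imaginary axis). This shared boundary ray is used essentially: the Koenigs maps are normalized to fix $i\R^+$, the tangential speeds of \emph{both} semigroups are expressed through the harmonic measure of that common arc, and only then does domain monotonicity of harmonic measure give \eqref{eq:vtmono0}. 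A general positive-step Koenigs domain sitting inside its smallest half-plane satisfies none of this, and arranging it is exactly what Proposition 3.5 (Jordan, analytic boundary) and the vertical cut of Proposition 4.1 (which creates a boundary vertical ray matching $\partial\Ha$ after translation) accomplish. So your appeal to a ``proper pair'' monotonicity for $(\Omega,\Ha)$ invokes a statement stronger than the paper's auxiliary result; as you yourself note, its proof would need harmonic-measure estimates, but without the shared boundary ray your sketched reduction to comparing $\Im\Xi$ with the modulus distortion has no proof and is not what the paper supplies. In short: the overall strategy (reduce to positive step, compare with the half-plane via harmonic measure) is in the right spirit, but both the zero-step reduction and the hypotheses needed to apply the monotonicity are missing, and these are precisely the substantive parts of the paper's proof.
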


The idea of the proof is first to reduce the study to semigroups for which the associated Koenigs functions map onto Jordan domains  (see Proposition \ref{prop: horo_redux}). Next, using quasi-geodesics and properties of Gromov hyperbolic spaces, we show that, if $(\phi_t)$ is a parabolic semigroup of zero hyperbolic step, its tangential speed is always less than or equal to the maximum of the tangential speeds of the two parabolic semigroups with positive hyperbolic steps which are naturally defined by dividing the image of the Koenigs function of $(\phi_t)$ with a vertical line (see Proposition~\ref{prop:positive_step_red}). Finally, we prove that Theorem~\ref{thm:main} holds for parabolic semigroups with positive hyperbolic steps by showing that, for some semigroups in this class, the tangential speed is ``asymptotically monotone'' with respect to the image of Koenigs functions (see Proposition~\ref{prop: vt_mono}). 

The outline of the paper is the following. In Section \ref{sect:prelim} there are some preliminary results about hyperbolic metric in simply connected domains and  semigroups in the unit disc, with a focus on the associated Koenigs functions (and their images, the Koenigs domains) and the way they are related to the type of a semigroup and the convergence of orbits. 
In Section \ref{sect:speeds} we recall some facts about the speeds of convergence and we prove Proposition~\ref{prop: horo_redux}.
The  proof of Theorem \ref{thm:main} is presented in Section \ref{sect:bound}, together with the proof of  Proposition~\ref{prop: vt_mono}. For this, we need some  harmonic measure theory, and we refer, for instance, to \cite[Chapter 7]{BCDbook} and the monograph \cite{GarMar}.

Finally, in Section~\ref{sect:appl} we apply Proposition~\ref{prop: vt_mono} in order to get estimates for the tangential and orthogonal speeds of semigroups whose Koenigs functions have images equal to a ``half-parabola'' domain $\Pi_{\alpha,m}=\{z\in\C:\Re z>0, \Im z>m(\Re z)^{\alpha}\}$ where $\alpha>1$ and $m>0$.

\medskip
I would like to thank the referee for all the remarks and comments made.

\section{Preliminaries}\label{sect:prelim}
\subsection{Hyperbolic distance in simply connected domains}
Let $\D:=\{z\in\C:|z|<1\}$ be the open unit disc in the complex plane $\C$. The \emph{hyperbolic norm} in $\D$ is defined as
\[
\varkappa_\D(z;\nu):=\frac{|\nu|}{1-|z|^2}\quad z\in\D,\,\nu\in T_z\D\cong \C,
\]
so for any $\gamma:[0,1]\to\D$ Lipschitz continuous path we can assign a \emph{(hyperbolic) length}
\[
\ell_\D(\gamma):=\int_0^1\varkappa_\D(\gamma(t);\gamma'(t))\,dt.
\]
Then, for any pair of points $z,w\in \D$, the \emph{hyperbolic} (or \emph{Poincar\'e}) \emph{distance} is the integrated distance
\[
k_\D(z,w):=\inf_{\gamma\in\Gamma_{z,w}}\ell_\D(\gamma),
\]
where $\Gamma_{z,w}:=\{\gamma:[0,1]\to\D\text{ Lipschitz}:\gamma(0)=z,\,\gamma(1)=w\}$. There is an explicit formula (see \cite[Theorem 1.3.5]{BCDbook}):
\[
k_\D(z,w):=\frac{1}{2}\log\frac{1+|\Theta_w(z)|}{1-|\Theta_w(z)|},\quad z,w\in\D,
\] where $\Theta_w$ is the automorphism of $\D$ given by $z\mapsto (w-z)/(1-\overline{w} z)$.

Now take any open simply connected set $\Omega\subsetneq\C$. By the Riemann Mapping Theorem there exists a conformal equivalence $f:\D\to\Omega$. Hyperbolic metric can be transfered to $\Omega$ by taking the pull-back under the map $f^{-1}$. The hyperbolic norm becomes
\[
\varkappa_\Omega(z;\nu):=\varkappa_\D(f^{-1}(z);(f'(f^{-1}(z)))^{-1}\nu),\quad z\in\Omega,\,\nu\in T_z\Omega\cong \C,
\]
while the hyperbolic length $\ell_\Omega$ and distance $k_\Omega$ are defined as above. The classical Schwarz-Pick Lemma can be restated in the language of hyperbolic metrics by saying that hyperbolic norms and distances are decreasing under holomorphic maps and invariant under biholomorphisms.

A \emph{geodesic} is a smooth curve for which the hyperbolic length between any pair of points coincides with their hyperbolic distance. Geodesics for $k_\D$ are well-know: they are given by diameters and arcs of circles intersecting $\de\D$ orthogonally. In particular there exist a unique geodesic between two points in $\D$, and a unique geodesic ray starting from a point in $\D$ and converging to a fixed point in $\de\D$.

For a generic simply connected domain $\Omega\subsetneq\C$, since any Riemann map $f:\D\to\Omega$ is an isometry between the metric space $(\D,k_\D)$ and $(\Omega,k_\Omega)$, we still have uniqueness of the geodesic between two points of $\Omega$. As for boundary issues, in general one can consider the Carath\'eodory boundary $\de_C\Omega$ of the prime ends and say there exists a unique geodesic ray arising from a inner point of $\Omega$ and converging to a fixed prime end (see for instance \cite[Chapter 4]{BCDbook}).

One particular case that will be important in the next sections is the right half-plane $\Ha:=\{w\in\C:\Re w>0\}$. Fix $\tau\in\de\D$: a Riemann map is given by the \emph{Cayley transform} 
\begin{equation}\label{eq:Cayley}
C_\tau:\D\to \Ha,\quad
C_\tau(z):=\frac{\tau+z}{\tau-z}.
\end{equation}
Using such a map one can get the expressions for hyperbolic norm and distance:
\begin{align*}
\varkappa_\Ha(w;\nu)&=\frac{|\nu|}{2\Re w},\quad w\in\Ha,\nu\in T_w\Ha\cong\C \\
k_\Ha(w_1,w_2)&=\frac{1}{2}\log\frac{1+\left|\frac{w_1-w_2}{w_1+\overline{w_2}}\right|}{1-\left|\frac{w_1-w_2}{w_1+\overline{w_2}}\right|},\quad w_1,w_2\in\Ha.
\end{align*}
The map $C_\tau$ is a M\"obius transformation, so the geodesics in $\Ha$ with the hyperbolic metric are given by arcs of circles crossing the imaginary axis orthogonally and lines parallel to the real axis.

\subsection{Semigroups in the unit disc}
A \emph{continuous semigroup of holomorphic self-maps in the unit disc} $(\phi_t)_{t\ge 0}$ (shortly, a \emph{semigroup in $\D$}) is a family of holomorphic functions $\phi_t\in\Hol(\D,\D)$ such that the map $t\mapsto\phi_t$ from $\R^+:=[0,+\infty)$ to $\Hol(\D,\D)$ has the following properties
\begin{enumerate}[label=(\roman*)]
		\item it is a semigroup homomorphism from $(\R^+,+)$ to $(\Hol(\D,\D),\circ)$;
		\item it is continuous, taking the Euclidean topology in $\R^+$ and the topology of uniform convergence on compact subsets in $\Hol(\D,\D)$.
\end{enumerate}
We say that a semigroup in $\D$ is \emph{non-elliptic} if the maps $\phi_t$ have no fixed point in $\D$. In that case, there exists a point $\tau\in\de\D$, the \emph{Denjoy-Wolff point} of the semigroup, such that $\lim_{t\to+\infty}\phi_t(z)=\tau$ for any $z\in\D$ and the convergence in uniform on any compact subset $K\subset\D$. This result, combined with Julia's Lemma \cite[Theorem 1.4.7]{BCDbook} says that for all $t>0$ and $R>0$
\begin{equation}\label{eq: horocy}
	\phi_t(\mathscr{E}(\tau,R))\subset\mathscr{E}(\tau,R)
\end{equation}
where $\mathscr{E}(\tau,R):=\{z\in\D:|\tau-z|^2<R(1-|z|^2)\}$ is the \emph{horocycle} of (hyperbolic) radius $R$ centered in $\tau$, which is in Euclidean terms an open circle contained entirely in $\D$ of radius $R/(R+1)$ and tangent to $\de\D$ at $\tau$.
Moreover, it can be proven that for any $t\ge 0$ the angular derivative at the Denjoy-Wolff point exists and is given by $\phi_t'(\tau)=e^{-\lambda t}$, where $\lambda\ge 0$. We say that a non-elliptic semigroup in $\D$ $(\phi_t)$ is \emph{parabolic} when $\lambda=0$, otherwise it is \emph{hyperbolic} of \emph{spectral value} $\lambda>0$.

For parabolic semigroups we have a further classification. Let us define the \emph{$1$-hyperbolic step} $s_1(\phi_t,z):=\lim_{t\to+\infty}k_\D(\phi_t(z),\phi_{t+1}(z))$. Two cases may occur: $s_1(\phi_t,z)$ is positive for all $z\in\D$ or it is always zero. We say that a parabolic semigroup has \emph{positive hyperbolic step} in the first situation and has \emph{zero hyperbolic step} in the second one.

\subsection{Canonical models}
For any non-elliptic semigroup $(\phi_t)$ in $\D$ one can associate in an essentially unique way a \emph{holomorphic model} of the form $(\Omega^*,h,z\mapsto z+it)$. 

Here $h:\D\to\C$ is a univalent map, so that its image $\Omega:=h(\D)$ is a simply connected domain in $\C$ which is \emph{starlike at infinity} in the positive direction of imaginary axis \footnote{In the following we will only write \emph{starlike at infinity} assuming that we have chosen this direction.} (which means that for all $t\ge 0$, $\Omega+it\subset \Omega$); furthermore $h$ intertwines $\phi_t$ with $z\mapsto z+it$, i.e. $h(\phi_t(z))=h(z)+it$ for all $z\in\D$ and $t\ge 0$. Note that this means that the orbits are mapped to vertical half-lines. The set $\Omega^*$ is defined as $\Omega^*:=\bigcup_{t\ge 0}(\Omega-it)$.  The model detects the type of our semigroup in the following way:
\begin{enumerate}[label=(\alph*)]
	\item $(\phi_t)$ is hyperbolic of spectral value $\lambda>0$ if it has a holomorphic model $(\Omega^*,h,z\mapsto z+it)$ so that $\Omega^*=\strip_{0,\pi/\lambda}:=\{w\in\C: 0<\Re w<\pi/\lambda\}$;
	\item $(\phi_t)$ is parabolic of positive hyperbolic step when it admits a holomorphic model $(\Omega^*,h,z\mapsto z+it)$ such that $\Omega^*=\Ha$ or $\Omega^*=-\Ha$;
	\item $(\phi_t)$ is parabolic of zero hyperbolic step if there exists a holomorphic model $(\Omega^*,h,z\mapsto z+it)$ so that $\Omega^*=\C$.
\end{enumerate}
As in \cite{BCDbook}, we will call $h$ the \emph{Koenigs function} of $(\phi_t)$ and $\Omega=h(\D)$ its \emph{Koenigs domain}. These are well defined up to composing with translations.

\subsection{Koenigs domains and convergence}
The Euclidean geometry of the Koenigs domain associated to a non-elliptic semigroup $(\phi_t)$ in $\D$ gives information about the convergence of orbits to the Denjoy-Wolff point.

Let $\tau\in\de\D$ be the Denjoy-Wolff point of $(\phi_t)$. The \emph{slope} of $\phi_t$ at $z\in\D$ is defined as the cluster set of $\arg(1-\overline{\tau}\phi_t(z))$ for $t\to+\infty$, which is a closed connected interval contained in $[-\pi/2,\pi/2]$, possibly reduced to a point. If it is contained in $(-\pi/2,\pi/2)$, the orbit $t\mapsto\phi_t(z)$ is said to converge \emph{non-tangentially} to $\tau$. Equivalently, $t\mapsto\phi_t(z)$ has non-tangential convergence if there exists $R>1$ such that the orbit is eventually contained in the \emph{Stolz region}
\begin{equation}\label{eq:stolzdef}
\mathcal{S}(\tau,R):=\{z\in\D:|\tau-z|<R(1-|z|)\}.
\end{equation}
An orbit $t\mapsto\phi_t(z)$ converges \emph{tangentially} to $\tau$ if the slope of the semigroup at $z$ reduces to $\{-\pi/2\}$ or $\{\pi/2\}$.

Now let $\Omega=h(\D)$ be the Koenigs domain of $(\phi_t)$ and fix $p\in\Omega$. Consider the following quantities:
\begin{align}\label{eq:distances}
\begin{aligned}
	\delta^+_p(t)&:=\min\,\{t,\inf\,\{|\zeta-p-it|:\zeta\in\de\Omega,\Re\zeta\ge\Re p\}\},\\
\delta^-_p(t)&:=\min\,\{t,\inf\,\{|\zeta-p-it|:\zeta\in\de\Omega,\Re\zeta\le\Re p\}\}.
\end{aligned}
\end{align}
The following deep result holds.
\begin{theorem}[{\cite[Theorem 1.2 and 1.3]{BCDGZ}}]\label{thm:quasi-sym-nontg}
	Let $(\phi_t)$ be a non-elliptic semigroup in $\D$ with Denjoy-Wolff point $\tau\in\de\D$. Let $h:\D\to \C$ be its Koenigs map and $\Omega=h(\D)$ the relative Koenigs domain. 
	\begin{enumerate}[label=(\alph*)]
		\item For some (and hence any) $z\in\D$ the orbit $t\mapsto\phi_t(z)$ converges \emph{non-tangentially} to $\tau$ if and only if for some (and hence any) $p\in\Omega$ there is a pair of constants $0<c<C$ such that for all $t\ge 0$ \[
		c\delta^+_p(t)\le\delta^-_p(t)\le C\delta^+_p(t).
		\]
		\item For some (and hence any) $z\in\D$ the orbit $t\mapsto\phi_t(z)$ converges \emph{tangentially} to $\tau$  if and only if one of the following holds:
		\begin{itemize}
			\item $\lim_{t\to+\infty}\delta^+_p(t)/\delta^-_p(t)=+\infty$ for some (and hence any) $p\in\Omega$;
			\item $\lim_{t\to+\infty}\delta^+_p(t)/\delta^-_p(t)=0$ for some (and hence any) $p\in\Omega$.
		\end{itemize}
	In the first case, the slope at any point in $\D$ is $\{-\pi/2\}$, while in the second one we have slope $\{\pi/2\}$ at all points of the unit disc.
	\end{enumerate}
\end{theorem}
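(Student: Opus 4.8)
The plan is to translate the slope at $\tau$ into the large-time behaviour of one bounded harmonic function on $\Omega$, and then to extract that behaviour from the Euclidean data $\delta^\pm_p$ via harmonic measure. First I would fix $z_0\in\D$ with $h(z_0)=p$, so that $\phi_t(z_0)=h^{-1}(p+it)\to\tau$. Since $\Re(1-\overline{\tau}\zeta)\ge 1-|\zeta|>0$ for $\zeta\in\D$, there is a holomorphic branch of $\log(1-\overline{\tau}\zeta)$ on $\D$ with imaginary part in $(-\pi/2,\pi/2)$, so
\[
u:=\Im\log\bigl(1-\overline{\tau}\,h^{-1}\bigr)
\]
is a bounded harmonic function on $\Omega$ with $|u|<\pi/2$, and the slope of $(\phi_t)$ at $z_0$ is exactly the cluster set of $t\mapsto u(p+it)$ as $t\to+\infty$. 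The boundary values of $u$ on the prime-end boundary of $\Omega$ (which exist $\omega_\Omega$-a.e.) are the numbers $\arg(1-\overline{\tau}\zeta)\in(-\pi/2,\pi/2)$, $\zeta\in\partial\D$: they vanish at the prime end $-\tau$ (the ``bottom'' end of $\Omega$, at $-i\infty$), vary monotonically as $\zeta$ moves away from $\tau$, and tend to $-\pi/2$, resp.\ $+\pi/2$, as $\zeta\to\tau$ along the two branches of $\partial\Omega$ accumulating at the ``top'' end ($+i\infty$). Hence the convergence is tangential with slope $\{-\pi/2\}$ (resp.\ $\{+\pi/2\}$) iff $u(p+it)\to-\pi/2$ (resp.\ $+\pi/2$), and non-tangential iff $\limsup_{t}|u(p+it)|<\pi/2$.

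The core step is a two-sided, uniform-in-$t$ comparison of harmonic measure with the quantities of \eqref{eq:distances}. Write $\partial^\pm$ for the pieces of $\partial\Omega$ with $\pm(\Re\zeta-\Re p)\ge 0$; near the top prime end $\partial^-$ carries boundary values of $u$ close to $-\pi/2$ and $\partial^+$ close to $+\pi/2$. I would then prove
\[
\omega_\Omega\bigl(p+it,\partial^-\bigr)\ \asymp\ G\!\left(\frac{\delta^+_p(t)}{\delta^-_p(t)}\right),
\]
for an explicit increasing comparison function $G$ with $G(0^+)=0$ and $G(+\infty)=1$, modelled on the harmonic measure of a quadrant/half-plane configuration of the corresponding aspect ratio. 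The upper bound on $\omega_\Omega(p+it,\partial^-)$ follows from monotonicity of harmonic measure under domain inclusion, enclosing $\Omega$ near $p+it$ in a larger model domain and using Beurling's projection theorem together with the Ahlfors (length--area) distortion estimate; the lower bound follows from the reverse inclusion into a model domain whose relevant harmonic measure is computable. The truncation at $t$ in \eqref{eq:distances} is what makes the estimate uniform: because $\Omega$ is starlike at infinity and $\{p+is:s\ge 0\}\subset\Omega$, the parts of $\partial\Omega$ at Euclidean distance $\gg t$ from $p+it$ carry a vanishing amount of harmonic measure and have boundary values of $u$ within $o(1)$ of $0$, so they cannot push $u(p+it)$ toward $\pm\pi/2$; capping $\delta^\pm_p$ at $t$ records exactly the ``effective'' geometry seen from $p+it$.

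Assembling: if $c\,\delta^+_p(t)\le\delta^-_p(t)\le C\,\delta^+_p(t)$ for all $t$, the displayed estimate keeps $\omega_\Omega(p+it,\partial^\pm)$ bounded away from $0$ and from $1$, so $u(p+it)$ stays in a fixed compact subinterval of $(-\pi/2,\pi/2)$, i.e.\ the convergence is non-tangential; conversely, non-tangential convergence forces $|u(p+it)|\le\pi/2-\varepsilon$, and reading the two-sided estimate backwards gives $\delta^+_p\asymp\delta^-_p$. For part (b), $\delta^+_p(t)/\delta^-_p(t)\to+\infty$ forces $\omega_\Omega(p+it,\partial^-)\to1$; since the harmonic measure then concentrates on the part of $\partial^-$ closest to $p+it$ --- the top branch, where $u\approx-\pi/2$ --- one obtains $u(p+it)\to-\pi/2$, i.e.\ slope $\{-\pi/2\}$; the case $\delta^+_p/\delta^-_p\to0$ is symmetric, giving slope $\{+\pi/2\}$. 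That these cluster sets are genuine limits uses only that a slope is always a closed subinterval of $[-\pi/2,\pi/2]$, so being squeezed against an endpoint makes it a point. Finally, independence of the conclusions from the choices of $z_0$ and $p$ is routine: changing $z_0$ moves the orbit a bounded hyperbolic amount, and changing $p$ alters each $\delta^\pm_p(t)$ only by bounded multiplicative factors for large $t$, neither of which affects the trichotomy above.

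The main obstacle is the harmonic-measure comparison of the middle step. One has essentially no regularity for $\partial\Omega$ (it need not be locally connected; it may carry cusps, slits, or spirals), so the estimate must rely solely on monotonicity of harmonic measure under inclusion and on extremal-length / Beurling-type inequalities, with the auxiliary model domains chosen scale-by-scale in $t$; one must also check that the comparison is genuinely two-sided and that the thresholds match, so as to produce simultaneously the sharp equivalence $\delta^+_p\asymp\delta^-_p$ of (a) and the exact limits $\pm\pi/2$ of (b). A secondary technical point is that, for wild $\partial\Omega$, the split of the boundary into a ``left'' and a ``right'' piece should be localized near the relevant height $t$, so that the monotonicity of the boundary values of $u$ is genuinely usable.
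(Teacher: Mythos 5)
This statement is not proved in the paper at all: it is quoted from \cite{BCDGZ}, and the route used there (recalled in Section 2 of the paper) is hyperbolic-geometric, namely the explicit curve $\sigma(t)=\tfrac12(\delta^+_p(t)-\delta^-_p(t))+i(\Im p+t)$ is shown to be a quasi-geodesic and the Shadowing Lemma for Gromov hyperbolic spaces then locates the orbit, up to bounded hyperbolic distance, relative to this curve. Your harmonic-measure strategy is therefore genuinely different, and its first step is sound and clean: $u=\Im\log(1-\overline{\tau}h^{-1})$ is bounded harmonic on $\Omega$ and $u(p+it)=\arg(1-\overline{\tau}\phi_t(z_0))$, so the theorem is exactly a statement about the asymptotics of $u(p+it)$.

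However, the proposal has genuine gaps precisely where the work lies. First, the central two-sided, uniform-in-$t$ comparison $\omega_\Omega(p+it,\partial^-)\asymp G(\delta^+_p(t)/\delta^-_p(t))$ is asserted, not proved; it is the entire content of the theorem (it must encode starlikeness at infinity, e.g.\ the fact that the complement contains a downward half-line through every one of its points, otherwise a nearest boundary point of small harmonic ``size'' would break it), and ``model domains chosen scale-by-scale plus Beurling and Ahlfors distortion'' is a plan, not an argument; in particular the lower bound on $\omega_\Omega(p+it,\partial^+)$ when $\delta^+\asymp\delta^-$, needed for the converse direction of (a), is nowhere substantiated. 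Second, even granting that estimate, $\omega_\Omega(p+it,\partial^-)\to1$ does not by itself give $u(p+it)\to-\pi/2$: the boundary values of $u$ on $\partial^-$ are \emph{not} uniformly close to $-\pi/2$ (they are close to $-\pi/2$ only on the prime ends near $\tau$, i.e.\ ``high up'' in the prime-end sense), so you need a concentration statement for harmonic measure on that portion of $\partial^-$. Your justification conflates Euclidean proximity of boundary points to $p+it$ with prime-end proximity to $\tau$, and the auxiliary claim that boundary pieces at distance $\gg t$ carry values of $u$ within $o(1)$ of $0$ is unjustified and in general false (far boundary can lie high up, where $u$ is near $\pm\pi/2$); what is true, and what you would actually need, is that such pieces carry little harmonic measure, again a Beurling-type estimate that must be made uniform. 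Until the comparison lemma and this concentration step are proved, the argument establishes neither direction of (a) nor the limit statements in (b).
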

\subsection{Quasi-geodesics}
The proof given in \cite{BCDGZ} of Theorem \ref{thm:quasi-sym-nontg} is based on the construction of a \emph{quasi-geodesic} of $\Omega$ for the hyperbolic metric $k_\Omega$, i.e. a Lipschitz curve $\sigma:[0,+\infty)\to\Omega$ such that $k_\Omega(\sigma(0),\sigma(t))\to+\infty$ for $t\to+\infty$ and there exist $A\ge1$ and $B\ge0$ for which
\[
\ell_\Omega(\sigma\mid_{[s,t]})\le Ak_\Omega(\sigma(s),\sigma(t))+B\quad\text{ for all }0\le s<t<+\infty.
\]
A quasi-geodesic satisfying this property for a fixed pair $(A,B)$ is called a \emph{$(A,B)$-quasi-geodesic}.
\begin{proposition}[{\cite[Theorem 4.2]{BCDGZ}}]
	Let $\Omega\subsetneq\C$ be a simply connected domain which is starlike at infinity. Fix $p\in\Omega$. Let $\delta^+_p,\delta^-_p$ be defined as in \eqref{eq:distances}. Then the curve $\sigma:[0,+\infty)\to\Omega$
	\begin{equation}\label{eq:sigmadefin}
	\sigma(t):=\frac{1}{2}(\delta^+_p(t)-\delta^-_p(t))+i(\Im p+t)
	\end{equation}
is a quasi-geodesic for the hyperbolic distance in $\Omega$.
\end{proposition}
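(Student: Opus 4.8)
The plan is to show that $\sigma$ is a Lipschitz curve in $\Omega$ escaping to infinity, that its hyperbolic length on $[s,t]$ is comparable to $\int_s^t dr/\mu(r)$ with $\mu(r):=\max\{\delta^+_p(r),\delta^-_p(r)\}$, and that $k_\Omega(\sigma(s),\sigma(t))$ admits a lower bound of the same order; the quasi‑geodesic inequality then follows. For the preliminary facts: since $\Omega$ is starlike at infinity, $p+it\in\Omega$ for all $t\ge0$, and $\sigma(t)$ is the horizontal translate of $p+it$, by the amount $\tfrac12|\delta^+_p(t)-\delta^-_p(t)|$, towards whichever of the two parts $\{\zeta\in\partial\Omega:\Re\zeta\gtrless\Re p\}$ of $\partial\Omega$ is farther from $p+it$. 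Each $\delta^\pm_p$ is $1$‑Lipschitz (it is the cut‑off at the $1$‑Lipschitz map $t\mapsto t$ of an infimum of the $1$‑Lipschitz functions $t\mapsto|\zeta-p-it|$), hence $\mu$ is $1$‑Lipschitz and $|\sigma'(t)|\le\sqrt2$ a.e. The translation defining $\sigma(t)$ points towards the farther part of $\partial\Omega$ and has length at most half the distance to that part, so it cannot reach $\partial\Omega$: thus $\sigma(t)\in\Omega$, the curve is locally Lipschitz for $k_\Omega$, and since $\Im\sigma(t)=\Im p+t\to+\infty$ it leaves every compact subset of $\Omega$, so $k_\Omega(\sigma(0),\sigma(t))\to+\infty$.

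Next I would pin down the size of $d_\Omega(\sigma(t)):=\dist(\sigma(t),\partial\Omega)$ by proving
\[
c_0\,\mu(t)\ \le\ d_\Omega(\sigma(t))\ \le\ \mu(t)+C_0\qquad(t\ge0),
\]
for an absolute $c_0>0$ and a constant $C_0=C_0(\Omega,p)$. The lower bound is a direct computation from the definitions: after re‑centring, $\sigma(t)$ lies at distance $\gtrsim\mu(t)$ from each of the two parts of $\partial\Omega$. For the upper bound, fix any $\zeta_0\in\partial\Omega$; because $\Omega$ is starlike at infinity the downward ray $\zeta_0-i[0,+\infty)$ is disjoint from $\Omega$, so $d_\Omega(w)\le\dist(w,\zeta_0-i[0,+\infty))$ for all $w\in\Omega$, and inserting this into the definition of $\delta^\pm_p$ (the cut‑off at $t$ doing the rest) yields the claim. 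By monotonicity of the hyperbolic metric, $\varkappa_\Omega(w;\nu)\le|\nu|/d_\Omega(w)$, so for $1\le s<t$
\[
\ell_\Omega(\sigma|_{[s,t]})=\int_s^t\varkappa_\Omega(\sigma(r);\sigma'(r))\,dr\ \le\ \frac{\sqrt2}{c_0}\int_s^t\frac{dr}{\mu(r)},
\]
while $\ell_\Omega(\sigma|_{[0,1]})$ is a finite constant, $\sigma([0,1])$ being compact in $\Omega$.

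The remaining, and main, obstacle is the matching lower bound $k_\Omega(\sigma(s),\sigma(t))\gtrsim\int_s^t dr/\mu(r)$ up to an additive constant. By Koebe's $\tfrac14$‑theorem $\varkappa_\Omega(w;\nu)\ge|\nu|/(4d_\Omega(w))$, so for any rectifiable path $\gamma$ from $\sigma(s)$ to $\sigma(t)$ in $\Omega$ we get $\ell_\Omega(\gamma)\ge\tfrac14\int_\gamma|d\gamma|/d_\Omega(\gamma)$; parametrising by $\Im\gamma$, which sweeps out $[\Im p+s,\Im p+t]$, it suffices to bound $\int_\gamma|d(\Im\gamma)|/d_\Omega(\gamma)$ below by $\int_s^t dr/\mu(r)$ minus a constant. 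The difficulty is that $d_\Omega$ can be much larger than $\mu$ away from the vertical line through $p$ (near a wedge‑like part of $\partial\Omega$ the in‑radius of $\Omega$ at a fixed height may even be infinite), so one cannot bound $d_\Omega$ by a function of $\Im w$ alone. The way around it is a dichotomy at each height $\Im p+r$: either $\gamma$ there stays within distance $\asymp\mu(r)$ of $\sigma(r)$, and then the upper estimate on $d_\Omega$ forces that level to contribute $\gtrsim dr/\mu(r)$; or $\gamma$ has travelled into a region of in‑radius $W$ with $W\gg\mu(r)$, and reaching such a region from $\sigma(s)$ already costs quasi‑hyperbolic length $\gtrsim\log(W/\mu(r))$, which — since $\mu$ is $1$‑Lipschitz and, crucially, capped by $r$ in the definition of $\delta^\pm_p$ — dominates the corresponding portion of $\int dr/\mu(r)$; this is exactly where "starlike at infinity" is used. (Alternatively, $(\Omega,k_\Omega)$ is Gromov hyperbolic with universal constant, so it suffices to verify that $\sigma$ is a local quasi‑geodesic: on a sub‑arc of bounded hyperbolic length $\mu$ varies by a bounded factor, there $k_\Omega$ is bi‑Lipschitz to a rescaled Euclidean metric, and $\sigma$ cannot backtrack since $\Im\sigma$ is strictly increasing.) Combining the length upper bound with this lower bound, comparing $dr/\mu(r)$ with $dr/(C_0+\mu(r))$ on $\{\mu\ge1\}$ and absorbing the bounded contribution of parameters in $[0,1]$, yields $A\ge1$ and $B\ge0$ with $\ell_\Omega(\sigma|_{[s,t]})\le A\,k_\Omega(\sigma(s),\sigma(t))+B$ for all $0\le s<t$, which together with $k_\Omega(\sigma(0),\sigma(t))\to+\infty$ shows that $\sigma$ is a quasi‑geodesic.
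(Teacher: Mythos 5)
The paper does not prove this statement at all: it is quoted verbatim from \cite[Theorem 4.2]{BCDGZ}, so your proposal has to be judged on its own. Your preliminary steps are essentially sound: $\sigma(t)$ stays in $\Omega$, the curve is Lipschitz with $\Im\sigma$ increasing, the two-sided comparison $c_0\mu(t)\le {\rm dist}(\sigma(t),\de\Omega)\le \mu(t)+C_0$ with $\mu=\max\{\delta^+_p,\delta^-_p\}$ is correct (the upper bound needs exactly the downward-ray observation together with the cut-off at $t$, as you indicate), and the resulting upper estimate $\ell_\Omega(\sigma|_{[s,t]})\lesssim\int_s^t dr/\mu(r)$ follows from $\varkappa_\Omega(w;\nu)\le|\nu|/{\rm dist}(w,\de\Omega)$.

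The genuine gap is the matching lower bound $k_\Omega(\sigma(s),\sigma(t))\gtrsim\int_s^t dr/\mu(r)$, which is the whole content of the theorem, and the dichotomy you sketch for it does not work as stated. In the second branch you argue that reaching a region of in-radius $W\gg\mu(r)$ costs $\gtrsim\log(W/\mu(r))$ and that this ``dominates the corresponding portion of $\int dr/\mu$''; but the logarithmic entry cost is paid once, whereas the portion of $\int_s^t dr/\mu(r)$ over the heights at which a competing path sits in a large-inradius region can grow linearly in the height range (this is precisely the regime where $\mu$ stays bounded on a long interval), so no single logarithm can dominate it. What actually saves the statement in that regime is the downward-closedness of $\C\setminus\Omega$: if $\delta^\pm_p$ are small at height $t$, the vertical rays hanging from the corresponding boundary points confine any path near the line $\{\Re w=\Re p\}$ at \emph{all} lower heights, so a competitor cannot even reach a large-inradius region at heights in $[s,t-O(1)]$ without first climbing above height $\approx t$; turning this confinement into quantitative hyperbolic-length estimates is the substantive work in \cite{BCDGZ}, and it is of a different nature from a one-off logarithmic cost. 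The parenthetical alternative via Gromov hyperbolicity has the same hole: the local-to-global principle requires a uniform positive lower bound for $k_\Omega(\sigma(s),\sigma(t))$ on sub-arcs of a definite length, and the monotonicity of $\Im\sigma$ alone does not rule out that $\sigma(t)$ returns hyperbolically close to $\sigma(s)$; proving that it cannot is again the crux, not a formality. As it stands, the proposal establishes the easy half (the length upper bound) and leaves the essential half unproved.
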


In a general simply connected domain we do not know the exact shape geodesics because there is no explicit expression of the Riemann map. However we only need proper estimates of the metric to get quasi-geodesics and it turns out that they are close to geodesics by the following result.
\begin{proposition}[Shadowing Lemma]\label{prop:shad}
	Let $\Omega\subsetneq\C$ be a simply connected domain which is starlike at infinity. Let $A\ge1$, $B\ge0$. Then there exists $M=M(A,B,\Omega)>0$ such that for any $(A,B)$-quasi-geodesic $\sigma:[0,+\infty)\to\Omega$ there exist a $k_\Omega$-geodesic $\eta:[0,+\infty)\to\Omega$ such that $\sigma(0)=\eta(0)$ and for any $t\ge 0$ it is
	\[
	\inf_{s\ge0} k_\Omega(\sigma(t),\eta(s))<M,\quad\inf_{s\ge0} k_\Omega(\eta(t),\sigma(s))<M.
	\]
	Furthermore, $\sigma$ and $\eta$ converge to the same prime end in the Carath\'eodory boundary $\de_C\Omega$.
\end{proposition}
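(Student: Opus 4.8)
The plan is to deduce Proposition~\ref{prop:shad} from the classical stability of quasi-geodesics (the Morse Lemma) in Gromov hyperbolic spaces. The key preliminary observation is that, by the Riemann Mapping Theorem, any conformal equivalence $f\colon\D\to\Omega$ is --- by the very definition of $k_\Omega$ as the push-forward of $k_\D$ --- a bijective isometry of $(\D,k_\D)$ onto $(\Omega,k_\Omega)$. Since $(\D,k_\D)$ is a model of the real hyperbolic plane, it is a \emph{proper, geodesic, $\delta_0$-hyperbolic} metric space for a universal constant $\delta_0>0$; hence so is $(\Omega,k_\Omega)$. In particular the constant $M$ below will depend only on $A$ and $B$, which is stronger than claimed. (Uniqueness of geodesics between two points of $\Omega$ has already been recorded in the excerpt.)

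First I would reconcile the notion of quasi-geodesic used here with the standard quasi-isometric-embedding one. If $\sigma\colon[0,+\infty)\to\Omega$ is an $(A,B)$-quasi-geodesic as in the excerpt, then $\ell_\Omega(\sigma|_{[0,t]})\ge k_\Omega(\sigma(0),\sigma(t))\to+\infty$, so $\sigma$ has infinite length and may be reparametrized by arc length, producing a $1$-Lipschitz curve $\tilde\sigma\colon[0,+\infty)\to\Omega$ with $\tilde\sigma(0)=\sigma(0)$ and the same image. For $0\le s<t$ one has $k_\Omega(\tilde\sigma(s),\tilde\sigma(t))\le\ell_\Omega(\tilde\sigma|_{[s,t]})=t-s$, while the defining inequality gives $t-s\le A\,k_\Omega(\tilde\sigma(s),\tilde\sigma(t))+B$; hence $\tfrac1A(t-s)-\tfrac BA\le k_\Omega(\tilde\sigma(s),\tilde\sigma(t))\le t-s$, i.e.\ $\tilde\sigma$ is a quasi-geodesic of $(\Omega,k_\Omega)$ in the usual sense with constants depending only on $A$ and $B$.

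Next I would run the standard construction of a shadowing geodesic ray. For each $n\in\N$ let $\eta_n$ be the (unique) geodesic from $\sigma(0)$ to $\tilde\sigma(n)$; since a geodesic is a $(1,0)$-quasi-geodesic and $\eta_n$, $\tilde\sigma|_{[0,n]}$ share their endpoints, the Morse Lemma for $\delta_0$-hyperbolic spaces furnishes $M_0=M_0(\delta_0,A,B)$ bounding the Hausdorff distance between the images of $\eta_n$ and of $\tilde\sigma|_{[0,n]}$ by $M_0$. The curves $\eta_n$ are $1$-Lipschitz, start at the fixed point $\sigma(0)$, and have lengths $k_\Omega(\sigma(0),\tilde\sigma(n))\to+\infty$; by properness and Arzel\`a--Ascoli a subsequence converges, uniformly on compacta, to a geodesic ray $\eta\colon[0,+\infty)\to\Omega$ with $\eta(0)=\sigma(0)$. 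A routine limiting argument (using the quasi-geodesic inequality to keep the relevant parameters bounded) upgrades the segmentwise estimates to $d_{\mathrm{Haus}}(\mathrm{im}\,\eta,\mathrm{im}\,\sigma)\le M_0$, that is, $\inf_{s\ge0}k_\Omega(\sigma(t),\eta(s))\le M_0$ and $\inf_{s\ge0}k_\Omega(\eta(t),\sigma(s))\le M_0$ for all $t\ge0$. Setting $M:=M_0+1$ gives the two strict inequalities.

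It remains to compare the two curves at the boundary. I would transport to the disc by $f^{-1}$: the curve $f^{-1}\circ\eta$ is a geodesic ray of $(\D,k_\D)$, hence converges to some $\zeta\in\de\D$, while $f^{-1}\circ\sigma$ stays within $k_\D$-distance $M_0$ of its image. Since $k_\D(f^{-1}(\sigma(0)),f^{-1}(\sigma(t)))\to+\infty$, any point of $f^{-1}\circ\eta$ within $M_0$ of $f^{-1}(\sigma(t))$ must likewise escape towards $\de\D$, hence converge to $\zeta$; and because a $k_\D$-ball of fixed radius centred at $z$ has Euclidean diameter tending to $0$ as $|z|\to1$, we conclude $f^{-1}(\sigma(t))\to\zeta$ as well. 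By Carath\'eodory's prime end theorem, $f$ extends to a homeomorphism of $\oD$ onto the prime end compactification of $\Omega$ carrying $\de\D$ onto $\de_C\Omega$; therefore $\sigma$ and $\eta$ converge to one and the same prime end, the one corresponding to $\zeta$. I do not expect an isolated deep obstruction here: the Morse Lemma is classical; the real care is in matching the two quasi-geodesic conventions and, above all, in the boundary identification, where one must pass from ``bounded hyperbolic distance'' to ``same radial limit'' in $\D$ and then convert this, via Carath\'eodory's theorem, into convergence to a common prime end.
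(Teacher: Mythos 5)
Your proof is correct and follows exactly the route the paper itself indicates: it gives no direct argument but refers to \cite[Theorem 6.3.8]{BCDbook} and to the general stability of quasi-geodesics in Gromov hyperbolic spaces \cite[p.~41]{CDP}, which is precisely the Morse-Lemma argument you carry out after transporting the hyperbolicity of $(\D,k_\D)$ to $(\Omega,k_\Omega)$ via the Riemann map. Your observation that $M$ can be taken to depend only on $A$ and $B$ (since $(\Omega,k_\Omega)$ is isometric to the hyperbolic plane) is a correct strengthening of the stated dependence $M=M(A,B,\Omega)$.
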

A direct proof can be found in \cite[Theorem 6.3.8]{BCDbook}. This is indeed a general fact about metric spaces which are \emph{Gromov hyperbolic}: for instance see \cite[p. 41]{CDP}.
\section{Speeds of convergence}\label{sect:speeds}
The \emph{speeds of convergence} were first introduced in the paper \cite{Br}: these are functions of the variable $t\in[0,+\infty)$ which are associated to a non-elliptic semigroup in $\D$ and which asymptotic behavior give further information about convergence to the Denjoy-Wolff point.
\begin{remark}[Notation]
	Let $f,g:(0,+\infty)\to\R$. In the following $f(t)\lesssim g(t)$ will be a shortcut for $\limsup_{t\to+\infty}[f(t)-g(t)]<+\infty$ and we will write $f(t)\sim g(t)$ whenever $f(t)\lesssim g(t)$ and $g(t)\lesssim f(t)$, or equivalently when $\sup_{t\ge a}|f(t)-g(t)|$ is bounded for any $a>0$.
\end{remark}
\begin{definition}
	Let $(\phi_t)$ be a non-elliptic semigroup in $\D$ and $\tau\in \de\D$ its Denjoy-Wolff point. For $t\ge 0$, the \emph{total speed} of the semigroup is simply given by the hyperbolic distance
		\[
		v_\phi(t):=k_\D(0,\phi_t(0)).
		\]
	The diameter $\gamma_\tau$ from $-\tau$ to $\tau$ is a $k_\D$-geodesic line. For any non-negative $t$, let $\xi_t=\pi_{\gamma_\tau}(\phi_t(0))$ be the hyperbolic projection of $\phi_t(0)$ onto $\gamma_\tau$, that is to say the unique point along the radius minimizing the hyperbolic distance from $\phi_t(0)$ which is given by the intersection of $\gamma_\tau$ with the $k_\D$-geodesic passing through $\phi_t(0)$ and crossing $\gamma_\tau$ orthogonally (\cite[Proposition 6.5.2]{BCDbook}). The \emph{orthogonal speed} of $(\phi_t)$ at $t$ is
		\[
		v^o_\phi(t):=k_\D(0,\xi_t)
		\]
	and the \emph{tangential speed} is
		\[
		v^T_\phi(t):=k_\D(\phi_t(0),\xi_t).
		\]
\end{definition}
A kind of ``Pythagoras' Theorem'' holds without squaring the terms, up to additive constants.
\begin{proposition}[{\cite[Proposition 3.4]{Br}}]
	Let $(\phi_t)$ be a non-elliptic semigroup in $\D$. Then for all $t\ge 0$ it is $v_\phi^o(t)+v_\phi^T(t)\sim v(t)$. More precisely,
	\begin{equation}\label{eq: Pythagoras}
		v_\phi^o(t)+v_\phi^T(t)-\frac{1}{2}\log 2\le v_\phi(t)\le v_\phi^o(t)+v_\phi^T(t).
	\end{equation}
\end{proposition}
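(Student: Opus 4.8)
\emph{Proof proposal.} The plan is to view $0$, $\xi_t$ and $\phi_t(0)$ as the vertices of a right-angled geodesic triangle in $(\D,k_\D)$ and then to push the hyperbolic Pythagorean theorem through elementary estimates on $\cosh$. The right-hand inequality $v_\phi(t)\le v^o_\phi(t)+v^T_\phi(t)$ is immediate: it is just the triangle inequality for $k_\D$ applied to the triple $0$, $\xi_t$, $\phi_t(0)$, recalling that $v_\phi(t)=k_\D(0,\phi_t(0))$, $v^o_\phi(t)=k_\D(0,\xi_t)$ and $v^T_\phi(t)=k_\D(\xi_t,\phi_t(0))$.

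For the left-hand inequality, note first that the diameter $\gamma_\tau$ is a $k_\D$-geodesic passing through $0$, and that by construction (cf. \cite[Proposition 6.5.2]{BCDbook}) the point $\xi_t$ lies on $\gamma_\tau$ while the geodesic segment from $\xi_t$ to $\phi_t(0)$ meets $\gamma_\tau$ orthogonally at $\xi_t$. Hence $0$, $\xi_t$, $\phi_t(0)$ are the vertices of a geodesic triangle with a right angle at $\xi_t$, with legs $v^o_\phi(t)$ and $v^T_\phi(t)$ and hypotenuse $v_\phi(t)$. Since $2k_\D$ is the hyperbolic distance of curvature $-1$ on $\D$ (the distance of the metric $2|dz|/(1-|z|^2)$), the hyperbolic Pythagorean theorem gives
\[
\cosh\bigl(2v_\phi(t)\bigr)=\cosh\bigl(2v^o_\phi(t)\bigr)\,\cosh\bigl(2v^T_\phi(t)\bigr).
\]

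It remains to convert this multiplicative identity into the additive bound. Set $a:=2v^o_\phi(t)\ge0$, $b:=2v^T_\phi(t)\ge0$, $c:=2v_\phi(t)\ge0$. From the product formula $\cosh a\,\cosh b=\tfrac12\bigl(\cosh(a+b)+\cosh(a-b)\bigr)\ge\tfrac12\bigl(\cosh(a+b)+1\bigr)$ we obtain $\cosh c\ge\tfrac12\bigl(\cosh(a+b)+1\bigr)$. If $a+b\le\log2$ the claimed inequality $v_\phi(t)\ge v^o_\phi(t)+v^T_\phi(t)-\tfrac12\log2$ is trivial, its right-hand side being $\le0$. If $a+b>\log2$, a one-line computation shows $\tfrac12(\cosh s+1)\ge\cosh(s-\log2)$ for every $s\ge\log2$ (it amounts to $e^{-s}\le\tfrac23$); applying this with $s=a+b$ and using that $\cosh$ is increasing on $[0,+\infty)$ yields $c\ge a+b-\log2$, which is exactly $v_\phi(t)\ge v^o_\phi(t)+v^T_\phi(t)-\tfrac12\log2$.

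The only genuinely delicate point is the sharpness of the constant $\tfrac12\log2$: bluntly discarding the term $\cosh(a-b)$ would cost a larger additive constant, so one must retain $\cosh(a-b)\ge1$ and split according to the sign of $a+b-\log2$. Everything else is just the hyperbolic Pythagorean theorem together with monotonicity of $\cosh$. (Alternatively one could avoid Pythagoras and compute directly in $\D$, normalising $\tau=1$ so that $\gamma_\tau=(-1,1)$ and $\xi_t$ is the intersection of $(-1,1)$ with the circle through $\phi_t(0)$ orthogonal to both $\R$ and $\de\D$, but the Pythagorean route is shorter.)
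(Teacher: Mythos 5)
Your proof is correct. Note that the paper itself gives no argument for this statement: it is quoted verbatim from \cite[Proposition 3.4]{Br}, where Bracci obtains the two inequalities by direct computations with explicit hyperbolic-distance formulas (working with the projection onto the diameter and Julia-type estimates), so there is no in-paper proof to match step by step. Your route is a clean self-contained alternative: the right-hand inequality is indeed just the triangle inequality, and for the left-hand one you correctly observe that $0$, $\xi_t$, $\phi_t(0)$ form a geodesic triangle with right angle at $\xi_t$ (this is exactly how $\xi_t$ is defined via \cite[Proposition 6.5.2]{BCDbook}), that $2k_\D$ is the curvature $-1$ distance for the paper's normalization $\varkappa_\D(z;\nu)=|\nu|/(1-|z|^2)$, so $\cosh(2v_\phi)=\cosh(2v^o_\phi)\cosh(2v^T_\phi)$, and your elementary estimate $\tfrac12(\cosh s+1)\ge\cosh(s-\log 2)$ for $s\ge\log 2$ (equivalent to $e^{-s}\le\tfrac23$, which even holds from $s\ge\log\tfrac32$ on) together with monotonicity of $\cosh$ on $[0,+\infty)$ and the trivial case $a+b\le\log 2$ yields exactly the constant $\tfrac12\log 2$; keeping the term $\cosh(a-b)\ge 1$ rather than discarding it is indeed what preserves the sharp constant, which is attained asymptotically when the two legs both tend to infinity. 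The degenerate cases $\xi_t=\phi_t(0)$ or $\phi_t(0)=0$ are covered trivially by the same identity, so no gap remains.
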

By \eqref{eq: Pythagoras} and Julia's Lemma we can also deduce (see \cite[Proposition 5.4]{Br}) that tangential speed is bounded by the orthogonal speed, up to a constant: for every $t\ge 0$
\[
v_\phi^T(t)\le v^o_\phi(t)+4\log 2.
\]
\begin{remark}
	Of course a choice has been made in defining the speeds of convergence by setting the initial point at the origin. Analogous definitions may be given for a different choice of the initial point, so the speeds would change but not the asymptotic behavior, as the respective differences are in fact bounded by a constant (see \cite[Lemma 3.7]{Br}). 
\end{remark}

The Euclidean meaning of speeds of convergence for a non-elliptic semigroup in $\D$ is given by the following proposition.
\begin{proposition}[{\cite[Proposition 3.8]{Br}}]
	Let $(\phi_t)$ be a non-elliptic semigroup in $\D$ and $\tau\in\de\D$ its Denjoy-Wolff point. Then for all $t\ge 0$
	\begin{align}
	\left|v_\phi(t)-\frac{1}{2}\log\frac{1}{1-|\phi_t(0)|}\right|&\le\frac{1}{2}\log 2\label{eq:v_eu},\\
	\left|v^o_\phi(t)-\frac{1}{2}\log\frac{1}{|\tau-\phi_t(0)|}\right|&\le\frac{1}{2}\log 2\label{eq:vo_eu},\\
	\left|v^T_\phi(t)-\frac{1}{2}\log\frac{|\tau-\phi_t(0)|}{1-|\phi_t(0)|}\right|&\le\frac{3}{2}\log 2.\label{eq:vt_eu}
	\end{align}
\end{proposition}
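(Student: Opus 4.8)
The plan is to evaluate the three speeds almost explicitly, transporting the problem to the right half-plane $\Ha$ via the Cayley transform, where geodesics and hyperbolic projections are completely transparent. First note that all five quantities appearing in the statement are unchanged if we replace $(\phi_t)$ by its conjugate under the rotation $z\mapsto\overline\tau z$ (a hyperbolic isometry of $\D$ fixing $0$ and sending $\gamma_\tau$ to $\gamma_1$), so I would assume $\tau=1$. Then \eqref{eq:v_eu} is immediate from the explicit formula for $k_\D$: since $\Theta_{\phi_t(0)}(0)=\phi_t(0)$, one has $v_\phi(t)=\frac12\log\frac{1+|\phi_t(0)|}{1-|\phi_t(0)|}$, hence $v_\phi(t)-\frac12\log\frac{1}{1-|\phi_t(0)|}=\frac12\log\bigl(1+|\phi_t(0)|\bigr)\in[0,\tfrac12\log 2)$.

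For \eqref{eq:vo_eu}, apply the Cayley transform $C_1\colon\D\to\Ha$; it is a hyperbolic isometry with $C_1(0)=1$ which carries the diameter $\gamma_1$ onto the positive real axis $(0,+\infty)$. Put $w:=C_1(\phi_t(0))\in\Ha$. The $k_\Ha$-geodesic through $w$ meeting $(0,+\infty)$ orthogonally is the arc of the circle $\{|\zeta|=|w|\}$: it crosses the imaginary axis orthogonally, hence is a geodesic, and it meets $(0,+\infty)$ at $|w|$ with vertical tangent. Therefore the hyperbolic projection $\xi_t$ of $\phi_t(0)$ onto $\gamma_1$ equals $C_1^{-1}(|w|)$, and since $k_\Ha(1,r)=\frac12|\log r|$ for every $r>0$ (a direct check with the displayed formula for $k_\Ha$), we obtain
\[
v^o_\phi(t)=k_\Ha\bigl(1,|w|\bigr)=\tfrac12\bigl|\log|w|\bigr|,\qquad |w|=\frac{|1+\phi_t(0)|}{|1-\phi_t(0)|}.
\]
To pin down the sign and the size of $|w|$ I would use Julia's Lemma in the form \eqref{eq: horocy}: since $0\in\mathscr E(1,R)$ for every $R>1$, also $\phi_t(0)\in\mathscr E(1,R)$ for every such $R$, and letting $R\to1^+$ gives $|1-\phi_t(0)|^2\le 1-|\phi_t(0)|^2$, i.e. $|\phi_t(0)-\tfrac12|\le\tfrac12$. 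Consequently $\Re\phi_t(0)\ge 0$, so $|1+\phi_t(0)|^2-|1-\phi_t(0)|^2=4\Re\phi_t(0)\ge0$ and $|w|\ge 1$, while $|1+\phi_t(0)|=|(\phi_t(0)-\tfrac12)+\tfrac32|\in[1,2]$. Hence $v^o_\phi(t)=\frac12\log\frac{|1+\phi_t(0)|}{|1-\phi_t(0)|}$ and, using $|1-\phi_t(0)|=|\tau-\phi_t(0)|$,
\[
v^o_\phi(t)-\tfrac12\log\tfrac{1}{|\tau-\phi_t(0)|}=\tfrac12\log|1+\phi_t(0)|\in\bigl[0,\tfrac12\log 2\bigr],
\]
which is \eqref{eq:vo_eu}.

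Finally, \eqref{eq:vt_eu} follows formally from \eqref{eq:v_eu}, \eqref{eq:vo_eu} and the ``Pythagoras'' inequality \eqref{eq: Pythagoras}, which yields $v^T_\phi(t)=v_\phi(t)-v^o_\phi(t)+\varepsilon(t)$ with $0\le\varepsilon(t)\le\tfrac12\log2$. Subtracting the identity $\frac12\log\frac{|\tau-\phi_t(0)|}{1-|\phi_t(0)|}=\frac12\log\frac{1}{1-|\phi_t(0)|}-\frac12\log\frac{1}{|\tau-\phi_t(0)|}$ and combining the three bounds of size $\tfrac12\log2$ (two from \eqref{eq:v_eu}, \eqref{eq:vo_eu}, one from $\varepsilon(t)$) by the triangle inequality gives $\bigl|v^T_\phi(t)-\frac12\log\frac{|\tau-\phi_t(0)|}{1-|\phi_t(0)|}\bigr|\le\frac32\log2$. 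The single genuinely geometric ingredient is the middle step: identifying $\xi_t$ with $|C_1(\phi_t(0))|$ in the half-plane model and, above all, invoking \eqref{eq: horocy} to confine $\phi_t(0)$ to the closed horocyclic disc $|\phi_t(0)-\tfrac12|\le\tfrac12$ — without this the ratio $|1+\phi_t(0)|/|1-\phi_t(0)|$ need not be $\ge1$ and the bound could break down for small $t$; everything else is bookkeeping with the explicit formulas for $k_\D$ and $k_\Ha$.
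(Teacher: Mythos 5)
Your proof is correct, and it follows essentially the standard route: the paper itself states this proposition as a quotation of \cite[Proposition 3.8]{Br} without reproducing a proof, and the argument there (as reflected in the computations \eqref{eq:compspeed} of Section 3) is the same one you give — transfer to $\Ha$ by the Cayley transform, identify the projection of $\rho_te^{i\theta_t}$ onto $(0,+\infty)$ with $\rho_t=|1+\phi_t(0)|/|1-\phi_t(0)|$, control $|1+\phi_t(0)|\in[1,2]$ via the Julia inclusion \eqref{eq: horocy}, and deduce the tangential estimate from \eqref{eq: Pythagoras}. All steps check out (your final bound is in fact $\log 2\le\tfrac32\log 2$), so there is nothing to correct.
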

A direct consequence of \eqref{eq:vt_eu} is that the orbits of $(\phi_t)$ have non-tangential convergence to the Denjoy-Wolff point if and only if $\limsup_{t\to+\infty}v^T_\phi(t)<+\infty$.

Geodesics may be replaced by quasi-geodesics in order to get the asymptotic behavior of tangential speed. 
\begin{proposition}\label{prop: gamma-sigma}
	Let $(\phi_t)$ be a non-elliptic semigroup in $\D$, with Koenigs map $h:\D \to \C$, Koenigs domain $h(\D)=\Omega$ and Denjoy-Wolff point $\tau\in\de\D$. Let $A\ge1$, $B\ge0$ and
	$\sigma:[0,+\infty)\to\Omega$ be a $(A,B)$-quasi-geodesic in $\Omega$ such that $h^{-1}(\sigma(s))\to\tau$ for $s\to+\infty$. There exists $M=M(A,B,\Omega)>0$ such that for any $t\ge0$
		\[
		|v_\phi^T(t)-\inf_{s\ge 0}k_{\Omega}(h(0)+it,\sigma(s))|<M.
		\]
\end{proposition}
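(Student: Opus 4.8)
The plan is to relate the tangential speed $v^T_\phi(t) = k_\D(\phi_t(0), \xi_t)$ to a distance computed with the quasi-geodesic $\sigma$ in the Koenigs domain $\Omega$, using both the explicit geodesic structure in $\D$ and the Shadowing Lemma (Proposition~\ref{prop:shad}). First I would transport the picture to $\D$ via the Koenigs map: since $h$ is a biholomorphism onto $\Omega$, it is an isometry between $(\D,k_\D)$ and $(\Omega,k_\Omega)$, and $h(\phi_t(0)) = h(0) + it$, so the orbit $t\mapsto\phi_t(0)$ corresponds to the vertical half-line $t\mapsto h(0)+it$ in $\Omega$. The curve $\eta := h^{-1}\circ\sigma$ is then an $(A,B)$-quasi-geodesic in $\D$ converging to $\tau\in\de\D$, and $\inf_{s\ge 0} k_\Omega(h(0)+it,\sigma(s)) = \inf_{s\ge 0}k_\D(\phi_t(0),\eta(s))$, so the claim reduces to showing $|v^T_\phi(t) - \inf_{s\ge 0}k_\D(\phi_t(0),\eta(s))| < M$ for a constant $M$ depending only on $A,B,\Omega$.

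Next I would control $\eta$ by the actual geodesic ray. By Proposition~\ref{prop:shad} applied in $\D$ (invoking that $k_\D$-geodesic rays to a fixed boundary point are unique), there exists $M_0 = M_0(A,B,\Omega)$ and a geodesic ray $\tilde\eta:[0,+\infty)\to\D$ with $\tilde\eta(0) = \eta(0)$, converging to the same prime end (hence to $\tau$, since $h$ extends compatibly to the Carath\'eodory boundary and $\eta(s)\to\tau$), such that $\eta$ and $\tilde\eta$ are within $M_0$ of each other in the two-sided sense. A geodesic ray in $\D$ converging to $\tau$ is (an arc of) a circle through $\tau$ orthogonal to $\de\D$, or a subray of the diameter $\gamma_\tau$; in either case it shares with $\gamma_\tau$ the common endpoint $\tau$, and I would use the fact (e.g. from the theory of the Gromov boundary, or a direct computation) that two geodesic rays to the same boundary point stay within a bounded Hausdorff distect of each other, and that the infimum of $k_\D(\phi_t(0),\cdot)$ over such a ray differs from $k_\D(\phi_t(0),\gamma_\tau) = k_\D(\phi_t(0),\xi_t) = v^T_\phi(t)$ by a bounded amount, with the bound again depending only on the data. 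Combining the triangle inequality with these two estimates yields $|v^T_\phi(t) - \inf_{s}k_\D(\phi_t(0),\eta(s))| < M$.

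The main obstacle I expect is making the last step uniform and clean: the equality $\inf_{s\ge 0}k_\D(\phi_t(0),\tilde\eta(s)) = v^T_\phi(t) + O(1)$ requires comparing the projection of $\phi_t(0)$ onto the full diameter $\gamma_\tau$ with its "projection" onto a geodesic subray that may start away from $-\tau$ and may not contain the nearest point of $\gamma_\tau$; one must check that the extra piece of $\gamma_\tau$ not covered by $\tilde\eta$ contributes at most a universal constant, which is plausible because $\phi_t(0)\to\tau$ forces $\xi_t\to\tau$ so the relevant projection point eventually lies on $\tilde\eta$ up to bounded error. A technically cleaner route, which I would probably take, is to avoid $\tilde\eta$ altogether for this comparison: directly estimate $\inf_s k_\D(\phi_t(0),\eta(s))$ against $\inf_s k_\D(\phi_t(0),\gamma_\tau(s))$ by noting that $\eta$ stays within bounded hyperbolic distance of the geodesic $\gamma_\tau$ (both being quasi-geodesics with the same endpoints $\eta(0)$-side controlled and $\tau$-side shared, via another application of the Shadowing Lemma or Gromov-hyperbolicity stability of quasi-geodesics), so their distance functions from any point agree up to an additive constant; then $\inf_s k_\D(\phi_t(0),\gamma_\tau(s)) = k_\D(\phi_t(0),\gamma_\tau) = v^T_\phi(t)$ exactly. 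Either way, the only real content is the stability of quasi-geodesics in the Gromov-hyperbolic disc, which is exactly what Proposition~\ref{prop:shad} packages, so the argument is short modulo bookkeeping of constants.
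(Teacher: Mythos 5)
Your argument is correct and takes essentially the same route as the paper: the paper works directly in $\Omega$, reduces to $\sigma(0)=h(0)$ by prepending a fixed arc, applies the Shadowing Lemma to compare $\sigma$ with the geodesic $\gamma=h([0,\tau))$ (which, by uniqueness of the geodesic ray from $h(0)$ to the prime end corresponding to $\tau$, is exactly the shadowing geodesic), and then obtains both bounds by two triangle inequalities---precisely your Shadowing-Lemma-plus-bookkeeping plan transported to $\D$. The one loose spot is the blanket claim in your ``cleaner route'' that the distance functions to $\eta$ and to the whole diameter $\gamma_\tau$ agree up to an additive constant at any point (this fails near $-\tau$); what is actually needed, and what the paper uses implicitly in writing $v^T_\phi(t)=k_\Omega(h(0)+it,\gamma(t'))$, is that the projection $\xi_t$ of $\phi_t(0)$ onto $\gamma_\tau$ lies on the radius $[0,\tau)$ (by Julia's Lemma the orbit stays in the horocycle through $0$), so $v^T_\phi(t)$ is the distance to the ray toward $\tau$---exactly the fix you already sketch in your first route.
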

\begin{proof}
	It can be assumed that $\sigma(0)=h(0)$: otherwise one can
	extend $\sigma$ with an initial fixed arc joining $h(0)$ to $\sigma(0)$, still having a quasi-geodesic ray. Let $\gamma\,$ be the $k_\Omega$-geodesic ray given by the image under $h$ of the radius from $0$ to $\tau$. By Proposition \ref{prop:shad} there exists a constant $M>0$ depending on the domain and the  such that for any $s\geq 0$ there exists $t_{s}$ for which
	$k_{\Omega}(\gamma(t_{s}),\sigma(s))<M$ and so
		\[
		v^{T}_\phi(t)\le k_{\Omega}(h(0)+it,\gamma(t_{s}))\le k_\Omega(\gamma(t_{s}),\sigma(s))+k_\Omega(h(0)+it,\sigma(s))<M+k_\Omega(h(0)+it,\sigma(s)).
		\]
	On the other side, using again Proposition~\ref{prop:shad}, if $\pi_{\gamma}(h(0)+it)=\gamma(t')$  , there exists
	$s'\geq 0$ such that $k_\Omega(\gamma(t'),\sigma(s'))<M$ and therefore
		\[
		v^{T}_\phi(t)=k_\Omega(h(0)+it,\gamma(t'))\ge k_\Omega(h(0)+it,\sigma(s'))-k_\Omega(\sigma(s'),\gamma(t'))>k_\Omega(h(0)+it,\sigma(s'))-M.\qedhere
		\]
\end{proof}

Using the Cayley transform $C=C_\tau$ defined in \eqref{eq:Cayley}, from a semigroup $(\phi_t)$ in $\D$ with Denjoy-Wolff point $\tau\in\de\D$ one can get a semigroup of holomorphic-self maps in $\Ha$ by setting
\begin{equation}\label{eq:semigr_ha}
	\psi_t:=C\circ\phi_t\circ C^{-1}.
\end{equation}
The Denjoy-Wolff point is now the point at infinity. We can define speeds of convergence in the same way by taking the initial point $C(0)=1$ and considering the projection of $\psi_t(1)$ onto $C(\gamma_\tau)=[1,+\infty)$ with respect to the distance $k_\Ha$: by conformal invariance the speeds coincide with the ones of $\phi_t$. The inclusion $\eqref{eq: horocy}$ and the Cayley transform imply that the orbit $[0,\infty)\ni t\mapsto\psi_t(1)$ lies entirely in $\{w:\Re w\ge 1\}$ and $t\mapsto \Re\psi_t(1)$ is non-decreasing. Writing down in polar coordinates $\psi_t(1)=\rho_te^{i\theta_t}$, this means that $\rho_t$ goes to infinity and the function $t\mapsto \rho_t\cos\theta_t$ is non-decreasing. We can use some known properties of the hyperbolic distance $k_\Ha$, recollected in \cite[Lemma 5.4.1]{BCDbook}, in order to compute speeds in terms of polar coordinates. Indeed, since the point in $[1,+\infty)$ minimizing the $k_\Ha$-distance from $\rho_te^{i\theta_t}$ is $\rho_t$,
\begin{align}\label{eq:compspeed}
	\begin{split}
	v^o_\psi(t)&=k_\Ha(1,\rho_t)=\frac{1}{2}\log\rho_t,\\
	v^T_\psi(t)&=k_\Ha(\rho_te^{i\theta_t},\rho_t)\sim\frac{1}{2}\log\frac{1}{\cos\theta_t}.
	\end{split}
\end{align}

We now show that restricting to Koenigs domains with smooth boundary is not a loss of generality if we deal with asymptotic behavior of orthogonal or tangential speeds.
\begin{proposition}[Horocycle reduction]\label{prop: horo_redux}
	Let $(\phi_t)$ be a non-elliptic semigroup in $\D$. There exists another semigroup $(\hat{\phi}_t)$ in $\D$ such that
	\begin{equation}\label{eq: horo_redux}
	v^T_\phi(t)\sim v^T_{\hat{\phi}}(t),\quad v^o_\phi(t)\sim v^o_{\hat{\phi}}(t),
	\end{equation}
 and the boundary of its Koenigs domain in $\C_\infty$ is the image of a Jordan curve (passing at $\infty$) which is smooth (analytic) in the complex plane.
\end{proposition}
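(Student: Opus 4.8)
The plan is to show that modifying the Koenigs domain $\Omega = h(\D)$ near its boundary, while keeping it "pinched" between two translates, changes neither the orthogonal nor the tangential speed up to a bounded additive error. First I would fix a base point $p \in \Omega$ with $\Im p = 0$, say, and observe via \eqref{eq:distances} and Theorem \ref{thm:quasi-sym-nontg} together with \eqref{eq:sigmadefin} that the asymptotics of both speeds are governed (up to additive constants, by Proposition \ref{prop: gamma-sigma}) entirely by the functions $\delta^+_p(t)$ and $\delta^-_p(t)$, which measure the Euclidean distance from the moving point $p + it$ to $\partial\Omega$ on the right and on the left. Concretely, $v^o_\phi(t) \sim \frac12 \log t$ whenever $\min\{\delta^+_p(t),\delta^-_p(t)\}$ stays comparable to $t$, and otherwise is controlled by $\tfrac12\log$ of the relevant $\delta$; and by \eqref{eq:compspeed} together with \eqref{eq:sigmadefin} the tangential speed satisfies $v^T_\phi(t) \sim \frac12\log\bigl(t/\min\{\delta^+_p(t),\delta^-_p(t)\}\bigr)$ up to a bounded error. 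So the strategy reduces to: replace $\Omega$ by a new starlike-at-infinity domain $\hat\Omega$ with the same $\delta^\pm$ functions up to bounded multiplicative factors, and with analytic boundary.

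The construction of $\hat\Omega$ is the technical heart. Since $\Omega$ is starlike at infinity, for each vertical line $x = \Re\zeta = c$ the slice $\Omega \cap \{\Re z = c\}$ is a union of upward-unbounded vertical rays of the form $(a_j(c), +\infty) + c$ — actually I should be more careful: starlikeness at infinity only says $\Omega + it \subset \Omega$, so each vertical slice is a union of half-lines $[\beta(c), +\infty)$ possibly together with isolated segments below, but the key point is that $\Omega$ is "eventually" the region above a lower-boundary function. I would set $b(c) := \inf\{\Im z : z \in \Omega, \Re z = c\}$ (allowing $-\infty$), which is upper semicontinuous, and define $\hat\Omega$ to be a smoothed version of the epigraph-type region bounded below by a regularized $b$. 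The horocycle inclusion \eqref{eq: horocy}, pushed through the Koenigs model, guarantees that $\Omega$ itself sits inside a horocyclic-type region at $\infty$, which after the Cayley-type normalization bounds how wild $b$ can be; combined with a standard mollification (convolving $b$ with a smooth bump at Euclidean scale comparable to the local distance to the boundary, à la Whitney) one gets an analytic — in fact one can arrange real-analytic — lower boundary curve. One then checks that passing to $\infty$ on both ends gives a Jordan curve in $\C_\infty$. The new domain $\hat\Omega$ has its own Koenigs function $\hat h$ (any Riemann map onto it, which exists and conjugates $z\mapsto z+it$ to a semigroup $(\hat\phi_t)$ since $\hat\Omega$ is starlike at infinity), and by construction $\delta^\pm_p$ for $\hat\Omega$ is within a bounded factor of that for $\Omega$.

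The main obstacle I anticipate is controlling the boundary regularization uniformly enough that the distances $\delta^\pm_p(t)$ are only perturbed by bounded multiplicative constants, and simultaneously ensuring the resulting curve is genuinely analytic (not merely $C^\infty$) and closes up to a Jordan curve through $\infty$ in $\C_\infty$ — degenerate cases where $\Omega$ has slits, cusps, or a boundary component that oscillates at unbounded amplitude need to be absorbed into the region below $b(c)$ without changing which side is "closer." A clean way around the analyticity difficulty is to first produce a $C^\infty$ Jordan boundary and then invoke a density/approximation argument: approximate the $C^\infty$ curve by an analytic one staying in a thin tube around it, using e.g. a truncated Fourier-type or Carleman approximation, and check that a thin-tube perturbation again changes $\delta^\pm_p$ only by a bounded factor. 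Once $\delta^\pm$ is controlled, \eqref{eq:compspeed} and Proposition \ref{prop: gamma-sigma} immediately yield $v^o_\phi(t)\sim v^o_{\hat\phi}(t)$ and $v^T_\phi(t)\sim v^T_{\hat\phi}(t)$, completing the proof.
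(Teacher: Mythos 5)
There is a genuine gap, and it sits at the very center of your plan: the claim that $v^o_\phi$ and $v^T_\phi$ are ``governed entirely by $\delta^\pm_p$ up to additive constants'', so that replacing $\Omega$ by a smoothed domain $\hat\Omega$ with $\delta^\pm$ comparable \emph{up to bounded multiplicative factors} forces $v^o_\phi\sim v^o_{\hat\phi}$ and $v^T_\phi\sim v^T_{\hat\phi}$. Neither \eqref{eq:compspeed} nor Proposition \ref{prop: gamma-sigma} gives this: \eqref{eq:compspeed} expresses the speeds through the \emph{exact} polar coordinates of the orbit in $\Ha$, and Proposition \ref{prop: gamma-sigma} converts $v^T_\phi(t)$ into a hyperbolic distance $\inf_s k_\Omega(h(0)+it,\sigma(s))$ computed \emph{inside the same domain} $\Omega$; it does not let you compare $k_\Omega$ with $k_{\hat\Omega}$. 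What comparability of the distance-to-boundary functions does give you (via Koebe/quasihyperbolic estimates) is only $k_{\hat\Omega}\asymp k_\Omega$ near the orbit up to \emph{multiplicative} constants, hence at best $v^T_{\hat\phi}(t)\asymp v^T_\phi(t)$ multiplicatively --- which is not the relation $\sim$ required in \eqref{eq: horo_redux} (and would be useless for the sharp bound $\limsup[v^T_\phi(t)-\tfrac12\log t]<\infty$, where a multiplicative loss destroys the constant $1/2$). Your asserted formula $v^T_\phi(t)\sim\tfrac12\log\bigl(t/\min\{\delta^+_p(t),\delta^-_p(t)\}\bigr)$ is exactly the kind of additive-precision statement that these soft estimates cannot produce and that is nowhere available in the paper. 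On top of this, the construction itself is unresolved for genuinely wild Koenigs domains (vertical slits, boundaries that are not graphs, non-Jordan boundaries as in Zarvalis' example): ``absorbing'' slits or oscillations changes $\delta^\pm$ by unbounded factors unless you do delicate surgery, and the passage from a $C^\infty$ curve to an analytic Jordan curve through $\infty$ is only sketched.

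For comparison, the paper's proof avoids all boundary surgery. It conjugates by the Cayley transform, sets $H=h\circ C^{-1}$, and defines $\hat\psi_t(w)=-\tfrac12+H^{-1}\bigl(H(w+\tfrac12)+it\bigr)$; the new Koenigs domain is $\hat\Omega=H(\{\Re w>1/2\})$, the image of a \emph{horocycle} under the conjugated Koenigs map, so its boundary is automatically an analytic Jordan curve because $H$ is conformal on $\Ha$. The speed comparison is then an exact computation: writing $\psi_t(1)=\rho_te^{i\theta_t}$, the new orbit is $\rho_te^{i\theta_t}-\tfrac12$, and \eqref{eq:compspeed} together with Julia's lemma (which gives $\Re\psi_t(1)\ge1$ non-decreasing, so $\rho_t\to\infty$ and the ratio of the relevant cosines tends to a finite nonzero limit) yields bounded additive differences of both speeds. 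If you want to salvage your approach, you would need an additive-precision stability theorem for the speeds under boundary perturbations, which is precisely what the horocycle trick circumvents.
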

\begin{proof}
	Let $h:\D\to\C$ be the Koenigs map of $(\phi_t)$ and $\Omega:=h(\D)$. Let $\tau\in\de\D$ be the Denjoy-Wolff point. By conjugating with the Cayley transform $C=C_\tau$ as in \eqref{eq:Cayley}, one gets the semigroup $\psi_t:=C\circ\phi_t\circ C^{-1}$ in the half-plane $\Ha$, with Denjoy-Wolff point at infinity. For $H:=h\circ C^{-1}$ one can define the semigroup $(\hat{\psi}_t)$ in $\Ha$  
	\[
	\hat{\psi}_t(w):=-\frac{1}{2}+H^{-1}\left(H\left(w+\frac{1}{2}\right)+it\right)\qquad \text{for }t\ge 0, w\in \Ha.
	\]
	Let $\psi_t(1):=\rho_te^{i\theta_t}$. Since $H(\psi_t(w))=H(w)+it$, then \[\hat{\psi}_t(1/2)=\rho_te^{i\theta_t}-\frac{1}{2}=\left|\rho_te^{i\theta_t}-\frac{1}{2}\right|\exp\left(i\arg\left(\rho_te^{i\theta_t}-\frac{1}{2}\right)\right)\]
	and from \eqref{eq:compspeed} one deduces that
	\[
	v^o_{\hat{\psi}}(t)\sim \frac{1}{2}\log \left|\rho_te^{i\theta_t}-\frac{1}{2}\right|,\quad v^T_{\hat{\psi}}(t)\sim\frac{1}{2}\log \frac{1}{\cos(\arg(\rho_te^{i\theta_t}-1/2))}.
	\]
	Now $v^o_{\hat{\psi}}(t)=v^o_{\hat{\phi}}(t)$ where $\hat{\phi}_t:=C^{-1}\circ \hat{\psi}_t\circ C$ is a corresponding semigroup in the unit disc, and the same holds for tangential speed. As $\rho_t-1/2\le |\rho_te^{i\theta_t}-1/2|\le \rho_t$ and $\rho_t\to+\infty$ as $t\to+\infty$, there exists $M_1>0$ such that $|v^o_\phi(t)- v^o_{\hat{\phi}}(t)|<M_1$ for any $t\ge 0$.
	For the tangential speed, recalling that $1\le \mathsf{Re}\,\psi_t(1)=\rho_t\cos\theta_t$ is non-decreasing in $t$, if $R:=\lim_{t\to+\infty}\rho_t\cos\theta_t\in[1,+\infty]$ one gets that
	\begin{align*}
	\lim_{t\to+\infty}\frac{\cos(\arg(\rho_te^{i\theta_t}-1/2))}{\cos\theta_t}&=\lim_{t\to+\infty}\frac{\rho_t}{|\rho_te^{i\theta_t}-1/2|}-\frac{1}{2|\rho_te^{i\theta_t}-1/2|\cos\theta_t}=\\
	&=\begin{cases}
	1-(2R)^{-1} & \text{if }R<+\infty\\
	1 & \text{if }R=+\infty.
	\end{cases}
	\end{align*}
	In any case it can be deduced the existence of $M_2>0$ such that $|v^T_\phi(t)- v^T_{\hat{\phi}}(t)|<M_2$ and so both relations in \eqref{eq: horo_redux} hold. 
	
	The Koenigs domain of $\hat{\phi}$ is $\hat{\Omega}=H(\{w:\Re w>1/2\})$ and $H$ is conformal on $\Ha$, hence $\partial\hat{\Omega}$ is an analytic curve.
\end{proof}

\section{Upper asymptotic bound for tangential speed}\label{sect:bound}

Proposition \ref{prop: horo_redux} gives a first important reduction in order to prove Theorem \ref{thm:main}: we can assume that the parabolic semigroup $(\phi_t)$ is associated to a Koenigs domain whose boundary is a simple Jordan curve. Now we show that it is enough to consider semigroups with positive hyperbolic step to get an upper bound for tangential speed.
\begin{proposition}[Reduction to positive hyperbolic step]\label{prop:positive_step_red}
	Let $(\phi_t)$ be a parabolic semigroup in $\D$ with Koenigs map $h:\D\to\C$ and Koenigs domain $\Omega=h(\D)$. Choose $\epsilon>0$ such that the Euclidean disc $B(h(0),2\epsilon)\subset\Omega$ and consider the following domains
		\[
		\Omega^{+}_\epsilon:=\Omega\cap\left\{ z:\mathsf{Re}\,z>\mathsf{Re }\  h(0)-\epsilon\right\}, 
		\quad
		\Omega^{-}_\epsilon:=\Omega\cap\left\{ z:\mathsf{Re}\,z<\mathsf{Re }\ h(0)+\epsilon\right\}. 
		\]
	If $(\phi_t)$ has zero hyperbolic step, then $\Omega^+_\epsilon$ and $\Omega^-_\epsilon$  are Koenigs domains of two parabolic semigroups $(\phi_t^+)$ and $(\phi_t^-)$ of positive hyperbolic step and
		\begin{equation}\label{eq: positive_step_red}
		\limsup_{t\to+\infty}\,[v^T_\phi(t)-\max\{v^T_{\phi^+}(t),v^T_{\phi^-}(t)\}]<+\infty.
		\end{equation}
	The same conclusion holds when $(\phi_t)$ has positive hyperbolic step, but in this case one of the two semigroups $(\phi_t^+), (\phi_t^-)$ is hyperbolic.
\end{proposition}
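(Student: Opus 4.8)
The plan is to prove the two assertions separately: that $\Omega^{+}_\epsilon$ and $\Omega^{-}_\epsilon$ are Koenigs domains of semigroups of the claimed type (routine), and inequality~\eqref{eq: positive_step_red} (the heart of the matter).

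For the first part: since $\Omega$ is starlike at infinity, the vertical segment $[z,z+it']$ lies in $\Omega$ for all $z\in\Omega$, $t'\ge0$, and it stays in each of the half-planes $\{\Re\zeta>\Re h(0)-\epsilon\}$, $\{\Re\zeta<\Re h(0)+\epsilon\}$; hence the component of $\Omega^{\pm}_\epsilon$ containing $h(0)$ is again starlike at infinity, and as a component of the intersection of a simply connected domain with a convex half-plane it is simply connected, so it is the Koenigs domain of a non-elliptic semigroup $\phi^{\pm}$ (choose $h^{\pm}$ with $h^{\pm}(0)=h(0)=:p$). To identify the type I would use the set identity
\[
\textstyle\bigcup_{t\ge0}(\Omega^{+}_\epsilon-it)=\big(\bigcup_{t\ge0}(\Omega-it)\big)\cap\{\Re\zeta>\Re p-\epsilon\}=\Omega^{*}\cap\{\Re\zeta>\Re p-\epsilon\},
\]
and symmetrically for $\Omega^{-}_\epsilon$ (passing to the relevant component throughout if necessary). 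Since $B(p,2\epsilon)\subset\Omega\subset\Omega^{*}$: in the zero hyperbolic step case $\Omega^{*}=\C$, so both intersections are half-planes and $\phi^{+},\phi^{-}$ are parabolic of positive hyperbolic step; in the positive step case $\Omega^{*}$ is (a translate of) a half-plane whose bounding line lies at Euclidean distance $\ge2\epsilon$ from $p$, whence one of the two intersections is again a half-plane (parabolic, positive step) and the other is a vertical strip (hyperbolic), by the dictionary of Section~\ref{sect:prelim}; non-ellipticity is automatic since the model map is a nontrivial vertical translation.

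For~\eqref{eq: positive_step_red} I would argue through quasi-geodesics. If $\phi$ converges non-tangentially then $v^T_\phi$ is bounded and there is nothing to prove, so assume $\limsup_{t}v^T_\phi(t)=+\infty$. Let $\sigma,\sigma^{+},\sigma^{-}$ be the quasi-geodesics~\eqref{eq:sigmadefin} of $\Omega,\Omega^{+}_\epsilon,\Omega^{-}_\epsilon$, all based at $p$ and each converging to the Denjoy--Wolff prime end of its domain, and put $q_t:=p+it$; by Proposition~\ref{prop: gamma-sigma}, $v^T_\phi(t)\sim\inf_{s\ge0}k_\Omega(q_t,\sigma(s))$ and $v^T_{\phi^{\pm}}(t)\sim\inf_{s\ge0}k_{\Omega^{\pm}_\epsilon}(q_t,\sigma^{\pm}(s))$. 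The key elementary point is that cutting $\Omega$ along a vertical line leaves the ``far'' distance function unchanged and truncates the ``near'' one at $\epsilon$: the function $\delta^{+}_p$ of $\Omega^{+}_\epsilon$ coincides with that of $\Omega$, while the $\delta^{-}_p$ of $\Omega^{+}_\epsilon$ equals $\min\{\delta^{-}_p(s),\epsilon\}$ (the nearest left boundary point is either an arc of $\partial\Omega$ inside the slab $\{\Re p-\epsilon\le\Re\zeta\le\Re p\}$ or the added vertical segment, at distance $\epsilon$), and symmetrically for $\Omega^{-}_\epsilon$. Consequently $\sigma(s)$ and $\sigma^{+}(s)$ lie on the same horizontal line at Euclidean distance $\tfrac12(\delta^{-}_p(s)-\epsilon)^{+}$, the horizontal segment joining them stays at distance $\gtrsim\min\{\delta^{+}_p(s),\delta^{-}_p(s)\}$ from $\partial\Omega$, and the Koebe estimate $\varkappa_\Omega(z;\nu)\lesssim|\nu|/\dist(z,\partial\Omega)$ gives $k_\Omega(\sigma(s),\sigma^{+}(s))\lesssim\delta^{-}_p(s)/\min\{\delta^{+}_p(s),\delta^{-}_p(s)\}$, which is bounded by a universal constant as soon as $\delta^{-}_p(s)\le\delta^{+}_p(s)$; symmetrically $k_\Omega(\sigma(s),\sigma^{-}(s))$ is universally bounded whenever $\delta^{+}_p(s)\le\delta^{-}_p(s)$. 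Thus at every height $\sigma(s)$ is within a fixed constant of $\sigma^{+}(s)$ or of $\sigma^{-}(s)$, so the curve $\hat\sigma$ equal to $\sigma^{+}$ on $\{s:\delta^{-}_p(s)\le\delta^{+}_p(s)\}$ and to $\sigma^{-}$ elsewhere lies within bounded distance of $\sigma$, is therefore itself a quasi-geodesic of $\Omega$ to the same prime end, and $v^T_\phi(t)\sim\inf_{s}k_\Omega(q_t,\hat\sigma(s))$.

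It then remains to bound $\inf_{s}k_\Omega(q_t,\hat\sigma(s))$ above by $\max\{v^T_{\phi^{+}}(t),v^T_{\phi^{-}}(t)\}$ up to an additive constant: at a near-minimizing height $s^{*}=s^{*}(t)$, since there $\hat\sigma$ coincides with $\sigma^{+}$ or $\sigma^{-}$ and $k_\Omega\le k_{\Omega^{\pm}_\epsilon}$, one gets $v^T_\phi(t)\lesssim k_{\Omega^{\pm}_\epsilon}(q_t,\sigma^{\pm}(s^{*}))$ for the appropriate sign, and this must be compared with $v^T_{\phi^{\pm}}(t)\sim\inf_{s}k_{\Omega^{\pm}_\epsilon}(q_t,\sigma^{\pm}(s))$. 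I expect this comparison to be the main obstacle: it amounts to showing that $s^{*}$ essentially realizes the infimum for $\phi^{\pm}$ as well --- equivalently, that the hyperbolic projection of the orbit point $q_t$, which sits at imaginary part $\Im p+t$, onto the geodesic of $\Omega^{\pm}_\epsilon$ towards its Denjoy--Wolff prime end lies at height $\approx\Im p+t$. I would attack this via the Shadowing Lemma (Proposition~\ref{prop:shad}) together with the fact that in a starlike-at-infinity domain the geodesic and the quasi-geodesic towards the Denjoy--Wolff prime end are monotone, up to bounded overshoot, in the imaginary coordinate, so a point at imaginary part $\Im p+t$ cannot project far from that height; combined with the segment estimate above this closes the inequality, and taking $\limsup_{t\to+\infty}$ finishes the proof.
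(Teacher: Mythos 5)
Your reduction of the first assertion (identifying the types of $(\phi^{\pm}_t)$ via $\bigcup_{t\ge0}(\Omega^{\pm}_\epsilon-it)=\Omega^{*}\cap\{\dots\}$) is fine, and your comparison of $\sigma$ with $\sigma^{\pm}$ is correct and close in spirit to what the paper uses: the $\delta^{+}$-function of $\Omega^{+}_\epsilon$ is that of $\Omega$ while its $\delta^{-}$-function is truncated at $\epsilon$, so $k_\Omega(\sigma(s),\sigma^{+}(s))$ is universally bounded where $\delta^{-}_p(s)\le\delta^{+}_p(s)$, and symmetrically for $\sigma^{-}$. The genuine gap is exactly the step you flag as the ``main obstacle''. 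From $v^T_\phi(t)\lesssim k_{\Omega^{\pm}_\epsilon}(q_t,\sigma^{\pm}(s^{*}))$ you still must prove that the near-minimizer $s^{*}$ of $s\mapsto k_\Omega(q_t,\hat\sigma(s))$ is, up to an additive constant, also a near-minimizer of $s\mapsto k_{\Omega^{\pm}_\epsilon}(q_t,\sigma^{\pm}(s))$ for the relevant sign; nothing in your argument controls where the minimizers of the two problems sit relative to each other (for instance, the minimizer of the $+$ problem may lie in the region $\{\delta^{+}_p<\delta^{-}_p\}$ and the minimizer of the $-$ problem in $\{\delta^{-}_p<\delta^{+}_p\}$, and then neither pointwise comparison applies). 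The heuristic you propose to close it is false as stated: for $\Omega^{+}_\epsilon=\Ha$ the geodesic from the base point to the Denjoy-Wolff prime end at $\infty$ is a \emph{horizontal} ray, so the hyperbolic projection of $p+it$ onto it has bounded imaginary part, not height $\approx \Im p+t$; the Shadowing Lemma only gives closeness in the hyperbolic metric, which yields no control on Euclidean heights, and at best a multiplicative comparability of parameters, which does not convert into the additive bound on distance values you need in a general $\Omega^{\pm}_\epsilon$. (A cosmetic point: your $\hat\sigma$ is discontinuous and fellow-traveling alone does not make it a quasi-geodesic in the paper's sense; but you do not need that, since $\inf_s k_\Omega(q_t,\hat\sigma(s))\sim\inf_s k_\Omega(q_t,\sigma(s))$ already follows from the pointwise bounded-distance estimate.)

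The paper closes precisely this point by a topological crossing argument that avoids locating minimizers altogether. For fixed $t$, let $L^{\pm}_t$ be the $k_{\Omega^{\pm}_\epsilon}$-geodesic segment from $p+it$ to a nearest point $\sigma^{\pm}(s^{\pm}_t)$ of $\sigma^{\pm}$. Since $\Re\sigma^{-}(s)<\min\{\Re p,\Re\sigma(s)\}$ and $\Re\sigma^{+}(s)>\max\{\Re p,\Re\sigma(s)\}$, the set $L^{+}_t\cup L^{-}_t\cup\sigma^{+}([s^{+}_t,\infty))\cup\sigma^{-}([s^{-}_t,\infty))\cup\{\infty\}$ is a Jordan curve in $\C_\infty$, and $\sigma$ must cross it at some point $q\in L^{+}_t\cup L^{-}_t$. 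If $q\in L^{+}_t$, then $\inf_s k_{\Omega^{+}_\epsilon}(p+it,\sigma^{+}(s))=\ell_{\Omega^{+}_\epsilon}(L^{+}_t)\ge k_{\Omega^{+}_\epsilon}(p+it,q)\ge k_\Omega(p+it,q)\ge\inf_s k_\Omega(p+it,\sigma(s))$, and similarly for $L^{-}_t$; Proposition~\ref{prop: gamma-sigma} then gives, for every $t$, $v^T_\phi(t)\le\max\{v^T_{\phi^{+}}(t),v^T_{\phi^{-}}(t)\}+c_0$, which is \eqref{eq: positive_step_red}. You should either adopt this crossing argument (your ordering of the real parts of $\sigma,\sigma^{\pm}$ is exactly what it needs) or supply a proof of the minimizer-localization claim; as written, the proof is incomplete at its decisive step.
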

\begin{proof}
	We only show the proof for $(\phi_t)$ with zero hyperbolic step. Up to a translation, we can assume $h(0)=0$. The two domains $\Omega_\epsilon^\pm$ are simply connected, starlike at infinity and they are contained in vertical half-planes, so they are Koenigs domains of parabolic semigroups in $\D$ $(\phi_t^\pm)$ of positive hyperbolic step.
	Let $\sigma$ be the quasi-geodesic given by \eqref{eq:sigmadefin}, where the initial point $p$ is set to be $0$. In the same fashion one can consider the quasi-geodesics $\sigma^{+}$, $\sigma^{-}$ in $\Omega^+_\epsilon$ and $\Omega^-_\epsilon$. Clearly
		\[
		\Re\sigma^{+}(t)>\max\{0,\Re\sigma(t)\},
		\quad \Re\sigma^{-}(t)<\min\{0,\Re\sigma(t)\}.
		\]
	Fix $t\ge 0$: there is $s_{t}^{+}$ such that $k_{\Omega^{+}_\epsilon}(it,\sigma^{+})=k_{\Omega^{+}_\epsilon}(it,\sigma^{+}(s_{t}^{+}))$
	and $s_{t}^{-}$ such that $k_{\Omega^{-}_\epsilon}(it,\sigma^{-})=k_{\Omega^{-}_\epsilon}(it,\sigma^{-}(s_{t}^{-}))$.
	Let $L_t^+$ be the $(k_{\Omega^{+}_\epsilon})$-geodesic segment from $it$ to $\sigma^{+}(s_{t}^{+})$
	and $L_t^-$ the $(k_{\Omega^{-}_\epsilon})$-geodesic segment from $it$ to $\sigma^{-}(s_{t}^{-})$.
	Then at least one of the two paths must intersect $\sigma$ somewhere. Indeed if $\Gamma$ is the set \[\Gamma:=L_t^+\cup L_t^-\cup\left\{\sigma^-(t)\mid t\ge s_t^-\right\}\cup\left\{\sigma^+(t)\mid t\ge s_t^+\right\},\] then $\Gamma\cup\{\infty\}$ the image of a Jordan curve in the Riemann sphere $\C_\infty$: by the Jordan Curve Theorem (see for instance \cite[p. 33]{Pombook75}) $\C\setminus \Gamma$ has two (path)-connected components whose boundaries coincide with $\Gamma$. The curve $\sigma$ lies in both components and does not intersect $\Gamma$ on the sides along $\sigma^+$ and $\sigma^-$, so it must cross $\Gamma$ at a least one point of $L_t^+\cup L_t^-$.
	
	Let $\mathcal{T}^+:=\{t\ge0 : L_t^+\cap\sigma([0,+\infty))\ne\varnothing\}$. If $t\in \mathcal{T}^+$ and $q\in L^{+}_t\cap\sigma([0,+\infty))$, then
		\[
		\inf_{s\ge 0}k_{\Omega^{+}_\epsilon}(it,\sigma^{+}(s))=\ell_{\Omega^{+}_\epsilon}(L^{+}_t)\ge k_{\Omega^{+}_\epsilon}(it,q)\ge k_{\Omega}(it,q)\ge\inf_{s\ge0} k_{\Omega}(it,\sigma(s))
		\]
	and by Proposition~\ref{prop: gamma-sigma} one concludes that $\sup_{t\in\mathcal{T}^+}[v_\phi^{T}(t)-v_{\phi^+}^{T}(t)]<c_0$ for some $c_0>0$. If otherwise $t\notin\mathcal{T}^+$, as $\sigma$ must intersect $L_t^-$ at some point, using the same argument one can conclude that \eqref{eq: positive_step_red} holds true.
\end{proof}

Tangential speed has not in general any ``essential monotonicity property'', but we have this kind of result in a particular case.

\begin{proposition}\label{prop: vt_mono}
	Let $(\phi_t), (\tilde{\phi}_t)$ be two parabolic semigroups in $\D$ of positive hyperbolic step and with respective Koenigs domains $\Omega, \tilde{\Omega}$ such that, up to translations:
	\begin{enumerate}[label=(\roman*)]
		\item $\Omega\subset\tilde{\Omega}$ and both are contained in $\Ha$ (or $-\Ha$);
		\item \label{en: vtmono_2} $\partial_\infty \Omega$ and $\partial_\infty \tilde{\Omega}$ are Jordan curves, both containing $i\R^+:=i(0,+\infty)$.
	\end{enumerate}
	Under these assumptions it follows that
		\begin{equation}\label{eq:vtmono0}
		\limsup_{t\to+\infty}\,[v^T_\phi(t)-v^T_{\tilde{\phi}}(t)]<+\infty.
		\end{equation}
\end{proposition}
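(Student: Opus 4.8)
The plan is to compare the tangential speeds via Proposition~\ref{prop: gamma-sigma}, exploiting the quasi-geodesic $\sigma$ of the form~\eqref{eq:sigmadefin} in each domain and the fact that $i\R^+$ is shared boundary. Fix the common initial point at $p=0$ on the boundary segment $i\R^+$ (or rather at an interior point close to it; one can normalize so that $0$ is a common base point of the models, using that translations along $i\R$ preserve all the hypotheses). Write $\sigma,\tilde\sigma$ for the quasi-geodesics of~\eqref{eq:sigmadefin} in $\Omega,\tilde\Omega$ respectively with this base point. Since $\Omega\subset\tilde\Omega$ and both sit in $\Ha$, the Schwarz--Pick inequality gives $k_{\tilde\Omega}\le k_\Omega$ on $\Omega$, so by Proposition~\ref{prop: gamma-sigma} (applied in each domain with the base point $it = h(0)+it$, here $h(0)=0$)
\[
v^T_{\tilde\phi}(t)\ \gtrsim\ \inf_{s\ge0}k_{\tilde\Omega}(it,\tilde\sigma(s)),\qquad v^T_\phi(t)\ \lesssim\ \inf_{s\ge0}k_\Omega(it,\sigma(s)).
\]
So it suffices to show $\inf_s k_\Omega(it,\sigma(s))\lesssim \inf_s k_{\tilde\Omega}(it,\tilde\sigma(s))$, up to an additive constant.

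**Main step.** The heart of the matter is a geometric comparison of the two infima. Let $\tilde\eta$ be a $k_{\tilde\Omega}$-geodesic from $it$ realizing (up to the shadowing constant) the infimum $\inf_s k_{\tilde\Omega}(it,\tilde\sigma(s))$, i.e. a geodesic segment from $it$ to the point $\tilde\sigma(\tilde s_t)$ nearest to $it$. I would like to ``push'' the relevant piece of this configuration into $\Omega$. The natural idea: because $i\R^+\subset\partial\Omega\cap\partial\tilde\Omega$ and both domains lie in $\Ha$, the domain $\Omega$ is obtained from $\tilde\Omega$ by cutting on the positive-real side, and the distance-minimizing geodesic from $it$ to $\tilde\sigma$ tends to hug the left part of the boundary (where $\delta^+_0 - \delta^-_0$ is most negative). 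Concretely, one uses that $\tilde\sigma(s)$ has real part $\tfrac12(\tilde\delta^+_0(s)-\tilde\delta^-_0(s))$, and one shows that for large $t$ the minimizing index $\tilde s_t$ produces a point with real part controlled so that a corresponding point of $\sigma$ (same height, or a height where $\delta^+_0-\delta^-_0$ is comparably negative) lies at comparable $k_{\tilde\Omega}$-distance from $it$; then $k_\Omega\le k_{\tilde\Omega}+$const via a collar/reflection estimate near the shared slit $i\R^+$, or more robustly via harmonic-measure estimates (the tool advertised for Section~\ref{sect:bound}): $v^T$ is essentially $\tfrac12\log(1/\omega)$ where $\omega$ is a harmonic measure of the ``far'' boundary arc, and this harmonic measure can only increase when we pass to the smaller domain $\Omega$, since the competing boundary near $i\R^+$ is the same. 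I expect the clean route is: express both tangential speeds through harmonic measure at $it$ of the two ``sides'' of the boundary meeting along $i\R^+$, invoke monotonicity of harmonic measure under domain inclusion (the part of $\partial\Omega$ to the right is pulled in, which shifts mass toward the slit side in the prescribed way), and conclude~\eqref{eq:vtmono0}.

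**Expected obstacle.** The hard part will be making the comparison of the minimizing points rigorous: a priori the geodesic in $\tilde\Omega$ realizing $\inf_s k_{\tilde\Omega}(it,\tilde\sigma(s))$ could wander near the right boundary of $\tilde\Omega$, which is \emph{not} in $\Omega$, so one cannot merely restrict it. One must argue that, for the purpose of the infimum, it may be taken to stay in a region where the two domains coincide or are comparable — e.g. in the half-plane $\{\Re z < \Re\,h(0)+\epsilon\}$ in the spirit of Proposition~\ref{prop:positive_step_red} — and this is exactly where the hypothesis that $i\R^+$ lies in \emph{both} boundaries, together with starlikeness at infinity, should be used. Controlling the behavior when the semigroup's orbit slope is $+\pi/2$ versus $-\pi/2$ (i.e. whether $\tilde\delta^+/\tilde\delta^-\to\infty$ or $\to 0$, cf. Theorem~\ref{thm:quasi-sym-nontg}) may require treating the two cases separately, but in both cases the monotonicity of harmonic measure should deliver the bound with a constant depending only on $\Omega,\tilde\Omega$ (and on $A,B$ through the shadowing constant $M$).
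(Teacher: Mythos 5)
Your quasi-geodesic half cannot be pushed through as set up: Schwarz--Pick gives $k_{\tilde\Omega}\le k_\Omega$ on $\Omega$, which bounds $\inf_s k_\Omega(it,\sigma(s))$ from \emph{below} by $\tilde\Omega$-distances, i.e.\ the inequality points in the unhelpful direction, and the minimizing $k_{\tilde\Omega}$-geodesic from $it$ to $\tilde\sigma$ may leave $\Omega$ altogether, so no restriction argument is available; you flag this obstacle yourself, but nothing in the proposal repairs it. What actually closes the argument is the harmonic-measure route you only sketch, and there two substantive ingredients are missing. First, the identity $v^T\sim\frac12\log(1/\omega_{\rm far})$ is not automatic: by \eqref{eq:compspeed} one has $v^T_\phi(t)\sim\frac12\log(1/\cos\theta_t)$ and $\cos\theta_t=\sin\bigl(\pi\,\omega(\psi_t(1),i\R^+,\Ha)\bigr)$, which is comparable to $\min\{\omega_{\rm slit},1-\omega_{\rm slit}\}$; to identify it with the harmonic measure of the far arc, and to run the monotonicity in the profitable direction on the interval where $x\mapsto\sin(\pi x)$ is decreasing, you need $\theta_t>0$ for large $t$, i.e.\ the fact that a parabolic semigroup of positive hyperbolic step whose Koenigs domain lies in $\Ha$ converges to its Denjoy--Wolff point tangentially with slope exactly $-\pi/2$ (\cite[Theorem 17.5.1]{BCDbook}, which is what the paper invokes). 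This is not a ``case to treat separately'': under the stated normalization the slope is forced, and it is precisely what guarantees $\omega(\,\cdot\,,i\R^+,\cdot)>1/2$ eventually. Note also that domain monotonicity must be applied to the common slit $i\R^+$ (the far arcs are \emph{different} boundary sets in the two domains), and then transferred to the far arcs via $\omega_{\rm far}=1-\omega_{\rm slit}$.

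Second, after monotonicity you are left with $\frac12\log\bigl(1/\sin(\pi\,\omega(p+it,i\R^+,\tilde\Omega))\bigr)$ evaluated along the \emph{first} semigroup's linearized orbit $p+it$, $p=H(1)$, and you must convert this into $v^T_{\tilde\phi}(t)$ up to a bounded error. The vertical ray through $p$ is an orbit of the linearized flow of $(\tilde\phi_t)$ through $q=\tilde H^{-1}(p)$, not through the canonical base point, so you need (a) conformal invariance of harmonic measure transported by $\tilde H$ up to the boundary, which uses the Carath\'eodory extension --- this is where hypothesis (ii) (Jordan boundaries) and the normalizations $H(i\R^+)=\tilde H(i\R^+)=i\R^+$ (achieved by conjugating with automorphisms of $\Ha$, harmless for the asymptotics of the speeds) actually enter, and your sketch never uses (ii) --- and (b) a base-point change: either your parenthetical normalization that the two models share a base point, which can indeed be arranged (precompose the Riemann maps with disc automorphisms; speeds then change by bounded amounts) but is asserted rather than argued, or, as the paper does, the explicit computation that $\cos(\arg(\tilde\psi_t(q)-\Im q))/\cos(\arg\tilde\psi_t(q))\to1$. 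With the slope fact and (a), (b) supplied, your harmonic-measure sketch becomes essentially the paper's proof; as written, these are genuine gaps rather than routine details.
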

\begin{proof}
	Assume that $\Omega$, $\tilde{\Omega}\subset\Ha$, as the proof will be analogous when considering $-\Ha$, and that both semigroups have Denjoy-Wolff point equal to $1$. Let $h:\D\to\Omega$ and $\tilde{h}:\D\to \tilde{\Omega}$ be Koenigs maps for the given pair of semigroups, and consider the corresponding semigroups in $\Ha$ $\psi_t:=C\circ\phi_t\circ C^{-1}$ and $\tilde{\psi}_t:=C\circ\tilde{\phi}_t\circ C^{-1}$ where $C$ is the Cayley transform \eqref{eq:Cayley} for $\tau=1$.
	By Carath\'eodory Extension Theorem (see for instance \cite[Section 4.3]{BCDbook}), their linearizing maps $H=h\circ C^{-1}$ and $\tilde{H}=\tilde{h}\circ C^{-1}$ extend to homeomorphisms from $\overline{\Ha}^{\infty}$ to the closures of the Koenigs domains in $\C_\infty$. Up to conjugating $(\psi_t)$ with a proper map in $\Aut(\Ha)$ (this does not change the Koenigs domain and the asymptotic behavior of hyperbolic speeds (see \cite[Proposition 16.1.6]{BCDbook}), we may also suppose that $i\R^+=H(i\R^+)$; the same assumption can be made for $\tilde{H}$. 
	
	Let $\psi_t(1)=\rho_te^{i\theta_t}$ where $\theta_t\in(-\pi/2,\pi/2)$. Since the orbits of $(\phi_t)$ have tangential convergence to the Denjoy-Wolff point with slope $-\pi/2$ (see \cite[Theorem 17.5.1]{BCDbook}), then $\theta_t\to\pi/2$ for $t\to+\infty$ and so $\theta_t>0$ for $t$ big enough.
	
	We can rewrite this in terms of harmonic measure. We use the notation $\omega(z_0,A,\Omega_0)$ for the harmonic measure at the point $z_0$ in a simply connected domain $\Omega_0$ of the Borel set $A\subset\partial_\infty\Omega_0$, which is the value $u(z_0)$ of the solution $u$ to the Dirichlet problem with boundary datum $u\mid_{\de_\infty\Omega_0}=\chi_A$.
	In this setting, for $t$ large enough $\theta_t>0$ means that $\omega(\psi_t(1),i\R^+,\Ha)>1/2$ and
		\[
		\cos\theta_t=\cos\left(\pi\omega(\psi_t(1),i\R^+,\Ha)-\frac{\pi}{2}\right)=\sin(\pi\omega(\psi_t(1),i\R^+,\Ha)).
		\]
	For the first equality, see for instance \cite[Example 7.2.6]{BCDbook}. By conformal invariance of harmonic measure \cite[Proposition 7.2.3 (3)]{BCDbook}, if $p:=H(1)$ then $\omega(\psi_t(1),i\R^+,\Ha)=\omega(p+it,i\R^+,\Omega)$ and $v^T_\phi(t)\sim -1/2\log\sin(\pi\omega(p+it,i\R^+,\Omega))$ by \eqref{eq:compspeed}. It is $\omega(p+it,i\R^+,\Omega)\le \omega(p+it,i\R^+,\tilde{\Omega}) $ by the domain monotonicity of harmonic measures  \cite[Proposition 7.2.10]{BCDbook} and then
		\begin{equation}\label{eq:vtmono1}
		\limsup_{t\to+\infty}\left[v^T_\phi(t)-\frac{1}{2}\log\frac{1}{\sin(\pi\omega(p+it,i\R^+,\tilde{\Omega}))}\right]<+\infty
		\end{equation}
	as $x\mapsto \sin x$ is decreasing in $(\pi/2,\pi)$. 
	
	Applying again conformal invariance of harmonic measures, this time using $\tilde{H}$, one gets $\omega(p+it,i\R^+,\tilde{\Omega})=\omega(\tilde{\psi}_t(q),i\R^+,\Ha)$ where $q:=\tilde{H}^{-1}(p)$. For $t$ large enough $\Im\tilde{\psi}_t(q)>0$, so
		\[
		\sin(\pi\omega(\tilde{\psi}_t(q),i\R^+,\Ha)=\cos(\arg(\tilde{\psi}_t(q))).
		\]
	A straightforward computation shows that
		\[
		\lim_{t\to+\infty}(\cos(\arg(\tilde{\psi}_t(q)-\Im q)))/(\cos(\arg(\tilde{\psi}_t(q))))=1,
		\]
	hence the tangential speed $v^T_{\tilde{\phi}}(t)$ is given by
		\[ v^T_{\tilde{\phi}}(t)\sim
		\frac{1}{2}\log\frac{1}{\cos(\arg(\tilde{\psi}_t(q)-\Im q))}\sim\frac{1}{2}\log\frac{1}{\cos(\arg(\tilde{\psi}_t(q)))}=\frac{1}{2}\log\frac{1}{\sin(\pi\omega(p+it,i\R^+,\tilde{\Omega}))}.
		\]
	Finally, by plugging this in \eqref{eq:vtmono1}, we get \eqref{eq:vtmono0}.
\end{proof}

\begin{remark}\label{rmk: vtmonomod}
	Proposition~\ref{prop: vt_mono} is also true, with analogous proof, if we replace the assumption \ref{en: vtmono_2} with
	\emph{\begin{enumerate}[label=(\roman*)]
		\item[(ii')] $\partial_\infty \Omega$ and $\partial_\infty \tilde{\Omega}$ are Jordan curves and there exists $r>0$ such that, if the domains are in $\Ha$, $\partial_\infty \Omega\cap\{w:0<\Re w<r\}=\partial_\infty \tilde{\Omega}\cap\{w:0<\Re w<r\}\ne\varnothing$, while if they are contained in $-\Ha$, $\partial_\infty \Omega\cap\{w:-r<\Re w<0\}=\partial_\infty \tilde{\Omega}\cap\{w:-r<\Re w<0\}\ne\varnothing$.
	\end{enumerate}}
\end{remark}
Now we are ready to give a proof of Theorem \ref{thm:main}.
\begin{proof}[Proof of Theorem \ref{thm:main}]
	Let $\Omega$ be the Koenigs domain of $(\phi_t)$. By Proposition~\ref{prop: horo_redux} we may assume that $\partial\Omega$ is a Jordan curve in the Riemann sphere. Using Proposition~\ref{prop:positive_step_red} we can find a couple of semigroups $(\phi_t^\pm)$ in $\D$  satisfying \eqref{eq: positive_step_red}. Moreover each one of the Koenigs domain of such semigroups is bounded by a Jordan curve in $\C_\infty$ containing a vertical half-line in the complex plane. If we make proper translations, one is contained in $\Ha$, the other one in $-\Ha$ and both boundaries contain $i\R^+$.
	
	On the other side, $\Ha$ and $-\Ha$ are trivially the Koenigs domains of the parabolic groups $\varphi^+_t(z):=C^{-1}(C(z)+it)$ and $\varphi_t^-(z):=C^{-1}(C(z)-it)$ where $C$ is any Cayley transform as in \eqref{eq:Cayley}. Since 
		\[
		v^T_{\varphi^+}(t)\sim\frac{1}{2}\log t,\quad v^T_{\varphi^-}(t)\sim\frac{1}{2}\log t,
		\]
	from Proposition~\ref{prop: vt_mono} it follows that $v^T_{\phi^\pm}(t)\lesssim\frac{1}{2}\log t$: the inequality \eqref{eq:conjecture} is a direct consequence of this and \eqref{eq: positive_step_red}.
\end{proof}

The Euclidean counterpart of Theorem \ref{thm:main} is the following:
\begin{proposition}
	Let $(\phi_t)$ be a non-elliptic semigroup in $\D$ with Denjoy-Wolff point $\tau\in\de\D$. Then there exist $R_0> 1$ so that for any $t\ge 1$
		\[
		\phi_t(0)\in \mathcal{S}(\tau,R_0\cdot t)
		\]
	where $\mathcal{S}(\tau,R_0\cdot t)$ is a Stolz region as in \eqref{eq:stolzdef}.
\end{proposition}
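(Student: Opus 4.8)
The plan is to deduce this statement from Theorem~\ref{thm:main} together with the estimate \eqref{eq:vt_eu}, which converts the tangential speed into information on the Euclidean position of the orbit. First I would observe that for \emph{every} non-elliptic semigroup $(\phi_t)$ in $\D$ one has $\limsup_{t\to+\infty}[v^T_\phi(t)-\frac{1}{2}\log t]<+\infty$: if $(\phi_t)$ is parabolic this is precisely Theorem~\ref{thm:main}, while if $(\phi_t)$ is hyperbolic the orbits converge non-tangentially, so $v^T_\phi$ is bounded and the difference $v^T_\phi(t)-\frac{1}{2}\log t$ even tends to $-\infty$. In either case there exist $K>0$ and $t_0\ge1$ with $v^T_\phi(t)\le\frac{1}{2}\log t+K$ for all $t\ge t_0$.

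Next I would apply \eqref{eq:vt_eu}, which gives $\frac{1}{2}\log\frac{|\tau-\phi_t(0)|}{1-|\phi_t(0)|}\le v^T_\phi(t)+\frac{3}{2}\log2$ for every $t\ge0$ (the ratio being well defined since $\phi_t(0)\in\D$). Exponentiating and using the previous bound, for $t\ge t_0$ we obtain
\[
\frac{|\tau-\phi_t(0)|}{1-|\phi_t(0)|}\le 8\,e^{2v^T_\phi(t)}\le 8e^{2K}\,t .
\]
On the compact interval $[1,t_0]$ the map $t\mapsto\frac{|\tau-\phi_t(0)|}{1-|\phi_t(0)|}$ is continuous (because $t\mapsto\phi_t$ is continuous and $|\phi_t(0)|<1$), hence bounded by some $M_1\ge0$, and there $\frac{|\tau-\phi_t(0)|}{1-|\phi_t(0)|}\le M_1\le M_1 t$. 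Taking $R_0:=2\max\{M_1,\,8e^{2K},\,1\}>1$ we get $|\tau-\phi_t(0)|\le\frac{R_0}{2}\,t\,(1-|\phi_t(0)|)<R_0\,t\,(1-|\phi_t(0)|)$ for every $t\ge1$, that is $\phi_t(0)\in\mathcal{S}(\tau,R_0\cdot t)$, as claimed.

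There is no genuine obstacle beyond Theorem~\ref{thm:main} itself; the statement is essentially its Euclidean reformulation. The only points that require a little care are the passage from the hyperbolic quantity $v^T_\phi(t)$ to the Euclidean ratio $|\tau-\phi_t(0)|/(1-|\phi_t(0)|)$, which is supplied verbatim by \eqref{eq:vt_eu}, and the absorption of the behavior on the bounded parameter range $[1,t_0]$ into the single constant $R_0$, which is immediate from continuity of the orbit.
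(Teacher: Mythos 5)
Your proof is correct and follows essentially the same route as the paper: combine the bound $v^T_\phi(t)\lesssim\frac{1}{2}\log t$ with \eqref{eq:vt_eu} and exponentiate to land in the Stolz region $\mathcal{S}(\tau,R_0\cdot t)$. You merely spell out two details the paper leaves implicit, namely the hyperbolic case (where $v^T_\phi$ is bounded by non-tangential convergence) and the absorption of the compact range $[1,t_0]$ into the constant, both handled correctly.
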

\begin{proof}
	By combining \eqref{eq:vt_eu} with \eqref{eq:conjecture} we infer that there exists $r_0>0$ such that for $t\ge 1$
	\[
	\log\frac{|\tau-\phi_t(0)|}{1-|\phi_t(0)|}< \log t + 3\log 2 + r_0
	\]
	and, setting $R_0=8e^{r_0}$, one concludes by taking the exponential.
\end{proof}

\begin{remark}[Essential monotonicity of orthogonal speed]
Suppose we have a pair of semigroups $\{(\phi_t),(\tilde{\phi}_t)\}$ as in Proposition \ref{prop: vt_mono} (or Remark \ref{rmk: vtmonomod}). Let $\Omega\subset\tilde{\Omega}$ be their respective Koenigs domains. By the monotonicity of hyperbolic metrics
\[
v_{\tilde{\phi}}(t)\le v_\phi(t).
\]
By combining this with \eqref{eq: Pythagoras} and \eqref{eq:vtmono0} one gets that
\begin{equation}\label{eq:vomono}
\limsup_{t\to+\infty}\,[v^o_{\tilde{\phi}}(t)-v^o_\phi(t)]<+\infty.
\end{equation}

In \cite{BCK} the same relation \eqref{eq:vomono} is obtained supposing that one of the semigroups has non-tangential convergence to the Denjoy-Wolff point, or assuming that $\tilde{\Omega}$ is a starlike domain with respect to an inner point. 

In what stated above, $\tilde{\Omega}$ has smooth boundary, it is contained in a half-plane, and it has a side in common with $\Omega$ (up to translations), but this time one can drop out the hypothesis of being a starlike domain.
\end{remark}
\section{Application: ``half-parabola'' domains}\label{sect:appl}
We want to show the asymptotic behavior of the orthogonal and tangential speeds of semigroups in $\D$ whose Koenigs domains are given by
	\[
	\Pi_{\alpha,m}:=\{z\in\C:\Re z>0, \Im z>m(\Re z)^{\alpha}\}
	\]
where $\alpha>1$ and $m>0$.

The case $\alpha=2$ is easier. Indeed the map $\Phi(z):=-iz^2+(4m)^{-1}$ is conformal from the half-strip $S:=\{z\in\C:\Im z>0,\, 0<\Re z<(2\sqrt{m})^{-1}\}$ onto the domain $\Pi_{2,m}$ (see \cite[p. 81-82]{Bie}). On the other side, $S$ is biholomorphic to $\Ha$ via $z\mapsto -i\sin(2\sqrt{m}\pi z-\pi/2)$ (see \cite[p. 276]{Neh}). Therefore we can express $k_{\Pi_\alpha}$ in terms of $k_\Ha$ and the conformal equivalence described above.

In the general case we do not know any explicit formula for a conformal mapping from $\Ha$ to $\Pi_{\alpha,m}$. But one can try to generalize the construction above with some modifications: it turns out that similar domains are obtained in this way.

Let $\mu>0$, $\alpha>1$ and denote by $\beta:=\frac{\alpha}{\alpha-1}$ its H\"older conjugate exponent. Consider the map 
	\[
	\Phi_{\alpha}(z)=i(-iz)^{\beta}
	\] 
which is well defined and holomorphic on the half-strip 
	\[
	S_{\alpha,\mu}:=\{z\in\mathbb{C}: 0<\mathsf{Re}\,z< (\mu^{1/\alpha}\beta)^{-1},\mathsf{Im}\,z> (\mu^{1/\alpha}\beta)^{-1}\cot(\pi/(2\beta))\}
	\]
by taking a branch of $z\mapsto z^{\beta}$ which is an analytic continuation of $x\mapsto x^\beta$ defined in $(0,+\infty)$.
The map $\Phi_{\alpha}$ is injective on $S_{\alpha,\mu}$ since the intersection of any circle centered in $0$ with $S_{\alpha,\mu}$ is an open arc describing an angle which amplitude is less than $\pi/2\beta<2\pi/\beta$. Therefore $\Phi_{\alpha}$ gives a conformal equivalence between $S_{\alpha,\mu}$ and the image $\Omega_{\alpha,\mu}:=\Phi_{\alpha}(S_{\alpha,\mu})$.

Let $c:=(\mu^{1/\alpha}\beta)^{-1}$ and $\eta:=\tan (\pi/(2\beta))$. The map $\Phi_{\alpha}$ extends to $\partial S_{\alpha,\mu}=L_1\cup L_2\cup L_3$ where 
	\[
	L_1=i\left(c/\eta,+\infty\right),\quad L_2=[0,c]+(c/\eta)i,\quad
	L_3=c+i\left(c/\eta,+\infty\right).
	\] 
Now $\Phi_{\alpha}(L_1)=i(c^{\beta}/\eta^{\beta},+\infty)$ is a vertical half-line. For $0\le s\le c$,
\[
\Phi_{\alpha}(s+ic/\eta)=(c^2/\eta^2+s^2)^{\frac{\beta}{2}}\left[\sin\left(\beta\arctan\frac{s\eta}{c}\right)+i\cos\left(\beta\arctan\frac{s\eta}{c}\right)\right],
\]
hence $\Phi_\alpha(L_2)$ is the image of the curve $[0,c]\ni s\mapsto\gamma_2(s):=\Phi_{\alpha}(s+ic/\eta)$ from $\gamma_2(0)=ic^{\beta}/\eta^{\beta}$ to $\gamma_2(c)=(c^2/\eta^2+c^2)^{\beta/2}$.  Note that, since $0\le\beta\arctan(s\eta/c)\le\pi/2$, then
	\[
	\frac{\partial \mathsf{Re}\,\gamma_2(s)}{\partial s}=\beta(c^2/\eta^2+s^2)^{\frac{\beta-2}{2}}\left[s \sin\left(\beta\arctan\frac{s\eta}{c}\right)+\frac{c}{\eta}\cos\left(\beta\arctan\frac{s\eta}{c}\right)\right]
	\]
is strictly greater than $0$ for any $s$, whereas
	\[
	\frac{\partial \mathsf{Im}\,\gamma_2(s)}{\partial s}=\beta(c^2/\eta^2+s^2)^{\frac{\beta-2}{2}}\left[s \cos\left(\beta\arctan\frac{s\eta}{c}\right)-\frac{c}{\eta}\sin\left(\beta\arctan\frac{s\eta}{c}\right)\right]
	\]
vanishes at $s=0$ and it is strictly negative for $0<s\le c$.

Let $t>0$ and $T:=t+c/\eta$. Thus
	\[
	\gamma_3(T):=\Phi_{\alpha}(c+iT)=(c^2+T^2)^{\frac{\beta}{2}}\left[\sin\left(\beta\arctan\frac{c}{T}\right)+i\cos\left(\beta\arctan\frac{c}{T}\right)\right].
	\]
Hence $\Phi_\alpha(L_3)$ is the image of the parametrized curve $(c/\eta,+\infty)\ni T\mapsto \gamma_3(T)$, which goes to $(c^2+c^2/\eta^2)^{\beta/2}$ for $T\to (c/\eta)^+$ and it tends to infinity when $T\to+\infty$. As $0<\beta\arctan(c/T)<\pi/2$ and $\beta>1$, it follows that
	\begin{align*}
	\frac{\partial \mathsf{Re}\,\gamma_3(T)}{\partial T}&=\beta(c^2+T^2)^{\frac{\beta-2}{2}}\left[T \sin\left(\beta\arctan\frac{c}{T}\right)-c\cos\left(\beta\arctan\frac{c}{T}\right)\right]>0,\\
	\frac{\partial \mathsf{Im}\,\gamma_3(T)}{\partial T}&=\beta(c^2+T^2)^{\frac{\beta-2}{2}}\left[T \cos\left(\beta\arctan\frac{c}{T}\right)+c\sin\left(\beta\arctan\frac{c}{T}\right)\right]>0.
	\end{align*}

Therefore $\Phi_\alpha(\partial S_{\alpha,\mu})$ is given by the concatenation of  $\Phi_\alpha(L_j), j=1,2,3$, which is a closed Jordan curve in $\C_\infty$: by Jordan Curve Theorem, it splits the Riemann sphere into two simply connected regions and the one not containing $0$ is the starlike at infinity domain $\Omega_{\alpha,\mu}$.

For the side $\gamma_3$ we have the following asymptotic behavior:
	\begin{align}\label{eq:omega_asym}
	\begin{split}
	\lim_{T\to+\infty}\frac{\mu(\mathsf{Re}\,\gamma_3(T))^{\alpha}}{\mathsf{Im}\,\gamma_3(T)}& =\lim_{T\to+\infty}\mu(c^2+T^2)^{\frac{\alpha}{2}}\sin^{\alpha-1}\left(\beta\arctan\frac{c}{T}\right)\tan\left(\beta\arctan\frac{c}{T}\right)=\\
	&=\lim_{T\to+\infty}\frac{(c^2+T^2)^{\frac{\alpha}{2}}}{T^\alpha}\mu\beta^{\alpha}c^{\alpha}=\frac{\mu\beta^{\alpha}}{\mu\beta^\alpha}=1.
	\end{split}
	\end{align}
	
Let $\zeta_0\in\Omega_{\alpha,\mu}$ lying along the geodesic line $\Phi_\alpha(S_{\alpha,\mu}\cap\{\mathsf{Re}\,z=(2\mu^{1/\alpha}\beta)^{-1}\})$ and consider the curve $t\mapsto\Phi_\alpha^{-1}(\zeta_0+it)$ for $t\ge 0$. Its real part $P(t)$ is bounded and its imaginary part $Q(t)$ is bounded from below. Let $x_0=\mathsf{Re}\,\zeta_0$ and $y_0=\mathsf{Im}\,\zeta_0$; then $\Phi_\alpha(P(t)+iQ(t))=x_0+i(t+y_0)$, i.e. 
\begin{equation}\label{eq: PQ}
(Q(t)-iP(t))^{\beta}=t+y_0-ix_0,
\end{equation}
so by taking modules in \eqref{eq: PQ} one gets that $P(t)^2+Q(t)^2=((t+y_0)^2+x_0^2)^{1/\beta}$, hence $Q(t)$ is such that
	\begin{equation}\label{eq:asympt_Q}
	\lim_{t\to+\infty} Q(t)=+\infty,\quad\lim_{t\to+\infty} \frac{Q(t)}{t^{1/\beta}}=1;
	\end{equation}
moreover $\arctan \frac{P(t)}{Q(t)}=\frac{1}{\beta}\arctan\frac{x_0}{t+y_0}$ by taking arguments in \eqref{eq: PQ}, so by easy computations one can say that
	\begin{equation}\label{eq:asympt_P}
	\lim_{t\to+\infty} P(t)=0,\quad\lim_{t\to+\infty} P(t)\cdot (\beta/x_0)t^{1/\alpha}=1.
	\end{equation}

Let $\Psi_\alpha$ be the conformal mapping $\Psi_\alpha(z)=-i\sin(-\pi/2+\pi z/c-i\pi/\eta)$ from $S_{\alpha,\mu}$ onto $\mathbb{H}$. Then 
	\[
	\Psi_\alpha(\Phi_\alpha^{-1}(\zeta_0+it))=-i\sin\left(-\frac{\pi}{2}+\frac{\pi}{c}P(z)+i\left(\frac{\pi}{c}Q(t)-\frac{\pi}{\eta}\right)\right)
	\] 
and setting $\tilde{P}(t):=\pi/2-\pi/c\cdot P(t)$, $\tilde{Q}(t):=\pi/c\cdot Q(t)-\pi/\eta$ the last term is equal to $
-i\sin (-\tilde{P}(t)+i\tilde{Q}(t))=(e^{\tilde{Q}(t)+i\tilde{P}(t)}-e^{-\tilde{Q}(t)-i\tilde{P}(t)})/2$.
Hence 
	\[
	2e^{-\tilde{Q}(t)}|\Psi_\alpha(\Phi_\alpha^{-1}(\zeta_0+it))|=\sqrt{1+e^{-4\tilde{Q}(t)}-2e^{-2\tilde{Q}(t)}\cos(2\tilde{P}(t))}
	\]
which converges to $1$ as $t\to+\infty$; from \eqref{eq:asympt_Q} one gets that \[
v^o_\phi(t)\sim\frac{1}{2}\log e^{\tilde{Q}(t)}\sim \frac{\pi}{2c}Q(t)\sim\frac{\pi}{2c} t^{1/\beta}=\frac{\pi}{2c} t^{1-1/\alpha}
\]
for any semigroup in $\D$ $(\phi_t)$ whose Koenigs domain is $\Omega_{\alpha,\mu}$ (up to translations). As for the tangential speed,
	\[
	\frac{\cos(\arg\Psi_\alpha(\Phi_\alpha^{-1}(\zeta_0+it)))}{\cos \tilde{P}(t)}=\frac{e^{\tilde{Q}(t)}-e^{-\tilde{Q}(t)}}{(e^{2\tilde{Q}(t)}+e^{-2\tilde{Q}(t)}-2\cos(2\tilde{P}(t)))^{1/2}}
	\] 
and the last term goes to $1$ as $t\to+\infty$. Clearly $\cos\tilde{P}(t)=\sin(\pi P(t)/c))$ and $P(t)\to 0$, hence \[
v^T_\phi(t)\sim\frac{1}{2}\log\frac{1}{\cos\tilde{P}(t)}\sim\frac{1}{2}\log\frac{1}{\sin(\pi P(t)/c)}\sim\frac{1}{2}\log \frac{1}{P(t)}\sim\frac{1}{2\alpha}\log t,
\]
where the last one holds because from \eqref{eq:asympt_P} it follows that $\log P(t)+(1/\alpha)\log t +\log(\beta/x_0)$ converges to $0$ for $t\to+\infty$. Eventually we have proven the following:
\begin{proposition}\label{prop:speed_omegaalpha}
	Let $\alpha>1$, $\mu>0$ and $(\varphi_t)$ a parabolic semigroup in $\D$ whose Koenigs domain is the set $\Omega_{\alpha,\mu}$ described above. Then
		\[
		v^T_{\varphi}(t)\sim\frac{1}{2\alpha}\log t,
		\]
		while for the orthogonal speed
		\[
		v^o_{\varphi}(t)\sim\frac{\mu^{1/\alpha}\alpha}{2(\alpha-1)}\pi\cdot t^{1-1/\alpha}.
		\]
\end{proposition}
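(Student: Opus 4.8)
The computation leading to the statement is in fact carried out in the paragraphs above; the plan is to organise it as a proof. First I would recall that the asymptotic behaviour of $v^o$ and $v^T$ depends only on the Koenigs domain up to translation: it is unchanged, up to a bounded additive error, when the base point is moved (\cite[Lemma 3.7]{Br}) and when the linearizing map into $\Ha$ is post-composed with an element of $\Aut(\Ha)$ (\cite[Proposition 16.1.6]{BCDbook}). Hence, fixing a semigroup $(\varphi_t)$ with Koenigs domain $\Omega_{\alpha,\mu}$ and the point $\zeta_0\in\Omega_{\alpha,\mu}$ chosen above, I may transport the vertical orbit $t\mapsto\zeta_0+it$ to $\Ha$ by the conformal equivalence $\Psi_\alpha\circ\Phi_\alpha^{-1}\colon\Omega_{\alpha,\mu}\to\Ha$, obtaining an orbit through a fixed point of a semigroup in $\Ha$ with Denjoy--Wolff point at $\infty$; then, exactly as in the proof of Proposition~\ref{prop: horo_redux}, formula \eqref{eq:compspeed} gives
\[
v^o_\varphi(t)\sim\tfrac12\log\bigl|\Psi_\alpha(\Phi_\alpha^{-1}(\zeta_0+it))\bigr|,\qquad
v^T_\varphi(t)\sim\tfrac12\log\frac{1}{\cos\bigl(\arg\Psi_\alpha(\Phi_\alpha^{-1}(\zeta_0+it))\bigr)}.
\]

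Next I would substitute the behaviour of $P(t)=\Re\Phi_\alpha^{-1}(\zeta_0+it)$ and $Q(t)=\Im\Phi_\alpha^{-1}(\zeta_0+it)$. From the identity $\Phi_\alpha(P+iQ)=x_0+i(t+y_0)$, i.e. \eqref{eq: PQ}, taking moduli and arguments separately yields $P(t)^2+Q(t)^2=((t+y_0)^2+x_0^2)^{1/\beta}$ and $\arctan(P(t)/Q(t))=\tfrac1\beta\arctan(x_0/(t+y_0))$; since $P(t)$ ranges in the bounded interval $(0,c)$ and $1/\beta=1-1/\alpha<1$, the first identity forces $Q(t)-t^{1/\beta}\to0$, and, expanding the arctangents for small argument, the second forces $P(t)\cdot(\beta/x_0)t^{1/\alpha}\to1$; these are \eqref{eq:asympt_Q} and \eqref{eq:asympt_P}. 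I then push these through the sine map: with $c=(\mu^{1/\alpha}\beta)^{-1}$, $\eta=\tan(\pi/(2\beta))$, $\tilde P(t)=\pi/2-\pi P(t)/c$ and $\tilde Q(t)=\pi Q(t)/c-\pi/\eta$, one has $\Psi_\alpha(\Phi_\alpha^{-1}(\zeta_0+it))=\tfrac12\bigl(e^{\tilde Q+i\tilde P}-e^{-\tilde Q-i\tilde P}\bigr)$, whose modulus is asymptotic to $\tfrac12 e^{\tilde Q(t)}$ and whose argument satisfies $\cos(\arg(\cdot))/\cos\tilde P(t)\to1$, both because $\tilde Q(t)\to+\infty$.

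Substituting into the two displayed relations gives $v^o_\varphi(t)\sim\tfrac12\tilde Q(t)\sim\tfrac{\pi}{2c}Q(t)\sim\tfrac{\pi}{2c}\,t^{1-1/\alpha}$, and $\tfrac{\pi}{2c}=\tfrac{\pi\mu^{1/\alpha}\beta}{2}=\tfrac{\pi\mu^{1/\alpha}\alpha}{2(\alpha-1)}$, the claimed orthogonal speed; and $v^T_\varphi(t)\sim\tfrac12\log\frac1{\cos\tilde P(t)}=\tfrac12\log\frac1{\sin(\pi P(t)/c)}\sim\tfrac12\log\frac1{P(t)}\sim\tfrac1{2\alpha}\log t$, using $P(t)\sim(x_0/\beta)t^{-1/\alpha}$ so that $\log(1/P(t))-\tfrac1\alpha\log t$ stays bounded. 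I do not expect a genuine obstacle: the substantive input --- constructing $\Phi_\alpha$, proving its injectivity via the ``angular amplitude $<\pi/(2\beta)$'' argument on circular arcs, and checking that $\Phi_\alpha(\partial S_{\alpha,\mu})$ is a Jordan curve in $\C_\infty$ so that $\Omega_{\alpha,\mu}$ is a bona fide Koenigs domain --- has already been settled above. The one point needing mild care is to keep every equivalence at the level of the strict relation $\sim$: replacing $\arctan u$ by $u$, $\sin v$ by $v$, and $(1+e^{-4\tilde Q}-2e^{-2\tilde Q}\cos2\tilde P)^{1/2}$ by $1$ must change the relevant logarithms only by bounded amounts, which follows from $P(t)\to0$, $\tilde Q(t)\to+\infty$, and from the fact that --- thanks to $\alpha>1$ --- the correction terms in $Q(t)$ and the relative error in $P(t)$ actually tend to $0$, rather than merely being of lower order.
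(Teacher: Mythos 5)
Your proposal is correct and follows essentially the same route as the paper, whose proof of this proposition is precisely the computation in the preceding paragraphs: transport the vertical orbit through $\zeta_0$ to $\Ha$ via $\Psi_\alpha\circ\Phi_\alpha^{-1}$, apply \eqref{eq:compspeed} (up to bounded errors from the choice of base point and of the map to $\Ha$), and substitute the asymptotics \eqref{eq:asympt_Q}--\eqref{eq:asympt_P} for $P(t)$ and $Q(t)$. Your remark that one actually needs $Q(t)-t^{1/\beta}\to 0$ (not merely the ratio tending to $1$) to keep the relation $\sim$ at the level of bounded differences is a point the paper leaves implicit, and your verification of it is correct.
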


Recalling the definition of $\Pi_{\alpha,m}$ and the limit \eqref{eq:omega_asym}, we infer that for $\epsilon,\epsilon'\in(0,m)$ there exist $r_1,r_2\in\R$ such that 
\[
\Omega_{\alpha,m+\epsilon}+ir_1\subset\Pi_{\alpha,m}\subset\Omega_{\alpha,m-\epsilon'}+ir_2.
\]
Combining the inclusions above, Proposition \ref{prop:speed_omegaalpha} and Proposition \ref{prop: vt_mono}, the conclusion is the following.
\begin{proposition}
	Let $\alpha>1$, $m>0$, and $(\phi_t)$ a parabolic semigroup in $\D$  whose Koenigs domain is $\Pi_{\alpha,m}$. In that case
		\begin{equation}\label{eq:vt_pi}
		v^T_\phi(t)\sim\frac{1}{2\alpha}\log t
		\end{equation}
	and for $\epsilon,\epsilon'>0$, $\epsilon'<m$
		\begin{equation}\label{eq:vo_pi}
		\frac{(m-\epsilon')^{1/\alpha}\alpha}{2(\alpha-1)}\pi\cdot t^{1-1/\alpha}\lesssim v_\phi^o(t)\lesssim\frac{(m+\epsilon)^{1/\alpha}\alpha}{2(\alpha-1)}\pi\cdot t^{1-1/\alpha}.
		\end{equation}
\end{proposition}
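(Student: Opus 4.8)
The plan is to sandwich the half-parabola $\Pi_{\alpha,m}$ between two vertical translates of the model domains $\Omega_{\alpha,\mu}$ with $\mu$ slightly above and slightly below $m$, and then transport the sharp asymptotics of Proposition~\ref{prop:speed_omegaalpha} to $\Pi_{\alpha,m}$ via the essential monotonicity of the tangential and orthogonal speeds.

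The starting point is the pair of inclusions recorded just above: for $\epsilon,\epsilon'\in(0,m)$ there are $r_1,r_2\in\R$ with $\Omega_{\alpha,m+\epsilon}+ir_1\subset\Pi_{\alpha,m}\subset\Omega_{\alpha,m-\epsilon'}+ir_2$, a consequence of the asymptotic comparison \eqref{eq:omega_asym} of the outer boundary arc of $\Omega_{\alpha,\mu}$ with the graph $\Im z=\mu(\Re z)^{\alpha}$. Since a Koenigs domain is defined only up to translation, every translate $\Omega_{\alpha,\mu}+ic$ is again a Koenigs domain of the same semigroup, hence carries the same speeds, and similarly for translates of $\Pi_{\alpha,m}$. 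I would exploit this to normalize each of the two inclusions so that both domains lie in $\Ha$ and both of their boundaries contain $i\R^+$: slide $\Omega_{\alpha,\mu}$ downward until its vertical boundary half-line $\Phi_\alpha(L_1)$ becomes exactly $i\R^+$, and then slide $\Pi_{\alpha,m}$ downward. The last move preserves the inclusion because both domains are starlike at infinity, so once $\Omega_{\alpha,\mu}+ir\subset\Pi_{\alpha,m}$ holds for one value of $r$ it holds for all larger $r$, leaving room to push $\Pi_{\alpha,m}$ down; and $\partial_\infty\Pi_{\alpha,m}$ contains the vertical ray $i[0,+\infty)\supset i\R^+$, so every downward translate of it still contains $i\R^+$. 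After these normalizations, each of the pairs $(\Omega_{\alpha,m+\epsilon},\Pi_{\alpha,m})$ and $(\Pi_{\alpha,m},\Omega_{\alpha,m-\epsilon'})$ meets assumptions (i)--(ii) of Proposition~\ref{prop: vt_mono}, the semigroups involved being parabolic of positive hyperbolic step because their Koenigs domains lie in $\Ha$.

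Write $(\varphi^{+}_t)$ and $(\varphi^{-}_t)$ for the semigroups with Koenigs domains $\Omega_{\alpha,m+\epsilon}$ and $\Omega_{\alpha,m-\epsilon'}$. Proposition~\ref{prop: vt_mono} applied to the two inclusions gives $v^{T}_{\varphi^{+}}(t)\lesssim v^{T}_{\phi}(t)\lesssim v^{T}_{\varphi^{-}}(t)$, and since Proposition~\ref{prop:speed_omegaalpha} gives $v^{T}_{\varphi^{+}}(t)\sim\tfrac{1}{2\alpha}\log t\sim v^{T}_{\varphi^{-}}(t)$ irrespective of the parameter, transitivity of $\lesssim$ yields \eqref{eq:vt_pi}. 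For the orthogonal speed I would invoke the essential monotonicity \eqref{eq:vomono} from the remark preceding this section, which applies under these same hypotheses once \eqref{eq:vt_pi} (equivalently \eqref{eq:vtmono0} for the two pairs) is known: the inclusion $\Omega_{\alpha,m+\epsilon}+ir_1\subset\Pi_{\alpha,m}$ gives $v^{o}_{\phi}(t)\lesssim v^{o}_{\varphi^{+}}(t)$, while $\Pi_{\alpha,m}\subset\Omega_{\alpha,m-\epsilon'}+ir_2$ gives $v^{o}_{\varphi^{-}}(t)\lesssim v^{o}_{\phi}(t)$; substituting $v^{o}_{\varphi^{+}}(t)\sim\tfrac{(m+\epsilon)^{1/\alpha}\alpha}{2(\alpha-1)}\pi\,t^{1-1/\alpha}$ and $v^{o}_{\varphi^{-}}(t)\sim\tfrac{(m-\epsilon')^{1/\alpha}\alpha}{2(\alpha-1)}\pi\,t^{1-1/\alpha}$ from Proposition~\ref{prop:speed_omegaalpha} then gives \eqref{eq:vo_pi}.

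The one genuinely delicate step — what I regard as the main obstacle — is the simultaneous choice of translations ensuring that the relevant boundaries contain $i\R^+$ while both inclusions persist; once that is in place, the proposition follows by direct substitution of the asymptotics already established for the domains $\Omega_{\alpha,\mu}$.
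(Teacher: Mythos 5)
Your proposal follows essentially the same route as the paper: sandwich $\Pi_{\alpha,m}$ between vertical translates of $\Omega_{\alpha,m+\epsilon}$ and $\Omega_{\alpha,m-\epsilon'}$, apply Proposition~\ref{prop: vt_mono} to both inclusions for \eqref{eq:vt_pi}, and use \eqref{eq:vomono} together with Proposition~\ref{prop:speed_omegaalpha} for \eqref{eq:vo_pi}; in fact you make explicit the translation normalization that the paper leaves implicit, and your treatment of the delicate inclusion $\Omega_{\alpha,m+\epsilon}+ir_1\subset\Pi_{\alpha,m}$ (slide $\Omega_{\alpha,m+\epsilon}$ down so its vertical side becomes $i\R^+$, then slide the \emph{outer} domain $\Pi_{\alpha,m}$ down to restore the inclusion) is exactly right. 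One small correction: your recipe, read verbatim, does not work for the other pair $\Pi_{\alpha,m}\subset\Omega_{\alpha,m-\epsilon'}+ir_2$, because there $\Pi_{\alpha,m}$ is the \emph{inner} domain, and pushing it downward can only destroy that inclusion (its lower boundary would dip below the corner region of the translated $\Omega_{\alpha,m-\epsilon'}$ near the imaginary axis); moreover, translating $\Omega_{\alpha,m-\epsilon'}$ only until its vertical side is exactly $i\R^+$ need not suffice to contain $\Pi_{\alpha,m}$. The fix is immediate and in the same spirit: leave $\Pi_{\alpha,m}$ untouched (its boundary already contains $i\R^+$) and slide $\Omega_{\alpha,m-\epsilon'}$ down as far as needed, which both restores the inclusion (translating the larger domain downward enlarges it) and keeps $i\R^+$ inside its boundary, since the vertical side then becomes a ray $i(a,+\infty)$ with $a\le 0$. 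With that adjustment both pairs satisfy (i)--(ii) of Proposition~\ref{prop: vt_mono} and the rest of your argument goes through as in the paper.
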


Therefore we can also say something in terms of Euclidean distances.
\begin{corollary}
	Let $\alpha>1$ and $m>0$. Given a parabolic semigroup in $\D$ $(\phi_t)$ whose Koenigs domain is $\Pi_{\alpha,m}$, if $\tau\in\de\D$ is the Denjoy-Wolff point of the semigroup, then there exists $R_1=R_1(\alpha,m)>1$ so that
	\begin{align}
	\begin{aligned}\label{eq:stolz_pi}
		\phi_t(0) &\in \mathcal{S}(\tau,R_1\cdot t^{1/\alpha}) &\text{if } &1\le t < R_1^{\alpha}\\
	\phi_t(0) &\in \mathcal{S}(\tau,R_1^2)\setminus([0,1)\tau) &\text{if } &t= R_1^{\alpha}\\
	\phi_t(0) &\in \mathcal{S}(\tau,R_1\cdot t^{1/\alpha})\setminus\overline{\mathcal{S}(\tau,R_1^{-1}\cdot t^{1/\alpha})} &\text{if } &t> R_1^{\alpha}
	\end{aligned}
	\end{align}
	Moreover, let $\Lambda_m^+,\Lambda_m^->0$ such that 
	$\Lambda_m^-<m^{1/\alpha}\alpha/(\alpha-1)< \Lambda_m^+$. Then there exists $R_2=R_2(\alpha,m,\Lambda_m^\pm)>1$ so that
	\begin{equation}\label{eq:rate_pi}
	\frac{1}{R_2}\exp\left(-\Lambda_m^+\pi\cdot t^{1-1/\alpha}\right)\le|\tau-\phi_t(0)|\le R_2\cdot \exp\left(-\Lambda_m^-\pi\cdot t^{1-1/\alpha}\right).
	\end{equation}
\end{corollary}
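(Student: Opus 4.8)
The plan is to read off \eqref{eq:stolz_pi} and \eqref{eq:rate_pi} from the asymptotics \eqref{eq:vt_pi} and \eqref{eq:vo_pi} via the Euclidean dictionary \eqref{eq:vo_eu}--\eqref{eq:vt_eu}, translating two-sided control of $\frac{|\tau-\phi_t(0)|}{1-|\phi_t(0)|}$ and of $\frac{1}{|\tau-\phi_t(0)|}$ into statements about Stolz regions \eqref{eq:stolzdef} and about exponential rates. This is exactly the mechanism used above for the Euclidean counterpart of Theorem~\ref{thm:main}, now with $\frac{1}{\alpha}\log t$ in place of $\log t$ and $t^{1-1/\alpha}$ appearing in the exponent.

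First I would treat \eqref{eq:stolz_pi}. By \eqref{eq:vt_pi} the difference $v^T_\phi(t)-\frac{1}{2\alpha}\log t$ is bounded on $[1,+\infty)$, so \eqref{eq:vt_eu} yields a constant $\log R_1$ with $R_1>1$ such that $\bigl|\log\frac{|\tau-\phi_t(0)|}{1-|\phi_t(0)|}-\frac{1}{\alpha}\log t\bigr|\le\log R_1$ for $t\ge1$; exponentiating gives $R_1^{-1}t^{1/\alpha}\le\frac{|\tau-\phi_t(0)|}{1-|\phi_t(0)|}\le R_1 t^{1/\alpha}$. Replacing $R_1$ by $2R_1$ (keeping the same name) makes both inequalities strict. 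The upper one is exactly $\phi_t(0)\in\mathcal{S}(\tau,R_1t^{1/\alpha})$. For the lower one I would use that $\mathcal{S}(\tau,R)=\varnothing$ when $R\le1$ (because $|\tau-z|\ge1-|z|$ on $\D$) and that $\{z\in\D:|\tau-z|=1-|z|\}$ is precisely the radius $[0,1)\tau$; hence $\frac{|\tau-\phi_t(0)|}{1-|\phi_t(0)|}>R_1^{-1}t^{1/\alpha}$ is non-vacuous only when $R_1^{-1}t^{1/\alpha}>1$, i.e.\ $t>R_1^\alpha$, where it reads $\phi_t(0)\notin\overline{\mathcal{S}(\tau,R_1^{-1}t^{1/\alpha})}$, while at $t=R_1^\alpha$ it reads $\phi_t(0)\notin[0,1)\tau$. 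Partitioning $[1,+\infty)$ as $[1,R_1^\alpha)\cup\{R_1^\alpha\}\cup(R_1^\alpha,+\infty)$ then produces the three lines of \eqref{eq:stolz_pi}.

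Next, for \eqref{eq:rate_pi}: given $\Lambda_m^-<\frac{m^{1/\alpha}\alpha}{\alpha-1}<\Lambda_m^+$ I would choose $\epsilon,\epsilon'\in(0,m)$ small enough that $\frac{(m-\epsilon')^{1/\alpha}\alpha}{\alpha-1}>\Lambda_m^-$ and $\frac{(m+\epsilon)^{1/\alpha}\alpha}{\alpha-1}<\Lambda_m^+$ (possible by continuity) and then apply \eqref{eq:vo_pi}. Since $t^{1-1/\alpha}\to+\infty$, each relation $\lesssim$ there becomes, for $t$ large, an honest inequality, so $\frac{\Lambda_m^-}{2}\pi t^{1-1/\alpha}\le v^o_\phi(t)\le\frac{\Lambda_m^+}{2}\pi t^{1-1/\alpha}$ eventually; inserting this into \eqref{eq:vo_eu} and exponentiating gives $\frac{1}{2}e^{-\Lambda_m^+\pi t^{1-1/\alpha}}\le|\tau-\phi_t(0)|\le 2e^{-\Lambda_m^-\pi t^{1-1/\alpha}}$ for $t$ large. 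On the complementary bounded range of $t$, the continuous functions $|\tau-\phi_t(0)|$ and $e^{-\Lambda_m^\pm\pi t^{1-1/\alpha}}$ are bounded away from $0$ and from $+\infty$, so enlarging the multiplicative constant absorbs that range and yields a single $R_2=R_2(\alpha,m,\Lambda_m^\pm)>1$ for which \eqref{eq:rate_pi} holds for every $t$.

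All of this is bookkeeping rather than substance. The points that need care are (i) tracking which Stolz regions are nonempty and reading the degenerate case $R\le1$ correctly in the lower estimate; (ii) upgrading the asymptotic symbols $\sim$ and $\lesssim$ to inequalities valid on the whole parameter range by folding the transient behavior into $R_1$ and $R_2$; and (iii) inflating $R_1$ by a fixed factor so that every inequality is \emph{strict} wherever a nonempty Stolz region, or the complement of a radius, is asserted (in particular at the threshold $t=R_1^\alpha$). I do not expect any genuine obstacle.
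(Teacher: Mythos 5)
Your proposal is correct and follows essentially the same route as the paper: combine \eqref{eq:vt_eu}, \eqref{eq:vo_eu} with the asymptotics \eqref{eq:vt_pi}, \eqref{eq:vo_pi}, exponentiate, and read off the Stolz-region statements (including the degenerate cases at and below $t=R_1^\alpha$ via $|\tau-z|\ge 1-|z|$ with equality exactly on $[0,1)\tau$) and the exponential rate, absorbing the transient range into the constants $R_1,R_2$. The only cosmetic difference is that the paper realizes $\Lambda_m^\pm$ exactly as $(m\mp\epsilon')^{1/\alpha}\alpha/(\alpha-1)$ rather than sandwiching them strictly, which changes nothing of substance.
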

\begin{proof}
	By \eqref{eq:vt_eu} and \eqref{eq:vt_pi}, there exists $r_1>0$, depending on $\alpha$ and $m$, such that for $t\ge 1$
	\[
	\frac{1}{\alpha}\log t - 3\log 2 - r_1<\log\frac{|\tau-\phi_t(0)|}{1-|\phi_t(0)|}< \frac{1}{\alpha}\log t + 3\log 2 + r_1.
	\]
	By defining $R_1=8e^{r_1}$ and taking exponential, one gets \eqref{eq:stolz_pi}. Indeed, as $|\tau-\phi_t(0)|/(1-|\phi_t(0)|)\ge 1$ with equality holding only when $\phi_t(0)\in [0,1)\tau$, the left inequality is trivial when $t^{1/\alpha}<R_1$ and says that $\phi_t(0)\notin[0,1)\tau$ when $t^{1/\alpha}=R_1$.
	
	Now fix $\epsilon,\epsilon'>0$, $\epsilon'<m$ and set
	\[
	\Lambda_m^-:=\frac{(m-\epsilon')^{1/\alpha}\alpha}{\alpha-1},\quad \Lambda_m^+:=\frac{(m+\epsilon)^{1/\alpha}\alpha}{\alpha-1}.
	\]
	From the relations \eqref{eq:vo_eu} and \eqref{eq:vo_pi} it follows that there exists another constant $r_2>0$, now depending also on the constants above, so that for all $t\ge 1$
	\[
	\Lambda_m^-\pi\cdot t^{1-1/\alpha} - \log 2 - r_2\le\log\frac{1}{|\tau-\phi_t(0)|}\le \Lambda_m^+\pi\cdot t^{1-1/\alpha} + \log 2 + r_2
	\]
	and \eqref{eq:rate_pi} trivially follows by setting $R_2=2e^{r_2}$.
\end{proof}
\begin{remark}
If we consider $\Pi_{\alpha,m}$ with $m>0$ and $\alpha>2$, then the associated semigroup $(\phi_t)$ is such that \[
\sup_{t\ge 1}\,\left|v^T_\phi(t)-\frac{1}{2}\log t\right|=+\infty.
\]
So we have a family of counterexamples of the conjecture in \cite[Section 8, Question 2]{Br}. This question has been recently answered negatively by Zarvalis in \cite{Zar}, where a counterexample is given by removing countable slits from an half-plane (so the boundary is not smooth) and the proof is based on harmonic measures in rectangular domains.
\end{remark}


\end{document}